\newcommand{\dq}[1]{\hat{#1}}
\newcommand{\vdq}[1]{\hat{\mathbf{#1}}}
\newcommand{\dqm}[1]{\hat{#1}}
\newcommand{\dqset}{\hat{\mathbb Q}}
\newcommand{\udqset}{\hat{\mathbb U}}
\newcommand{\cset}{\mathbb C}
\newcommand{\cgroup}{{\mathbb C}^{\times}}
\newcommand{\ucset}{\mathbb T}
\begin{document}
	\large
	
	\title{Unit Dual Quaternion Directed Graphs, Formation Control and General Weighted Directed Graphs}
	\author{ Liqun Qi\footnote{Department of Applied Mathematics, The Hong Kong Polytechnic University, Hung Hom, Kowloon, Hong Kong.
			({\tt maqilq@polyu.edu.hk}).}
		\and \
		\and \
		Chunfeng Cui\footnote{LMIB of the Ministry of Education, School of Mathematical Sciences, Beihang University, Beijing 100191 China.
			({\tt chunfengcui@buaa.edu.cn}).}
		\and {and \
			Chen Ouyang	\footnote{School of Computer Science and Technology,
				Dongguan University of Technology, Dongguan 523000, China. ({\tt oych26@163.com}).}}
	}
	\date{\today}
	\maketitle
	

	\begin{abstract}
		
		We study the multi-agent formation control problem in a directed graph. The relative configurations are expressed by unit dual quaternions (UDQs).  We call such a weighted directed graph a unit dual quaternion directed graph (UDQDG).   We show that
		a desired relative configuration scheme is reasonable or balanced in a UDQDG if and only if
		the dual quaternion Laplacian is similar to the unweighted Laplacian of the underlying directed graph.
		A direct method and a unit gain graph method are proposed to solve the balance problem of general unit weighted directed graphs.
		We then study the balance problem of general non-unit weighted directed graphs.
		Numerical experiments for UDQDG are reported.

		\medskip


		\textbf{Key words.} {Unit dual quaternion},  {{weighted directed} graph}, formation control, relative configuration,  unit gain graph.
	\end{abstract}

	\renewcommand{\Re}{\mathds{R}}
	\newcommand{\rank}{\mathrm{rank}}
	\newcommand{\X}{\mathcal{X}}
	\newcommand{\A}{\mathcal{A}}
	\newcommand{\I}{\mathcal{I}}
	\newcommand{\B}{\mathcal{B}}
	\newcommand{\C}{\mathcal{C}}
	\newcommand{\OO}{\mathcal{O}}
	\newcommand{\e}{\mathbf{e}}
	\newcommand{\0}{\mathbf{0}}
	\newcommand{\dd}{\mathbf{d}}
	\newcommand{\ii}{\mathbf{i}}
	\newcommand{\jj}{\mathbf{j}}
	\newcommand{\kk}{\mathbf{k}}
	\newcommand{\va}{\mathbf{a}}
	\newcommand{\vb}{\mathbf{b}}
	\newcommand{\vc}{\mathbf{c}}
	\newcommand{\vq}{\mathbf{q}}
	\newcommand{\vg}{\mathbf{g}}
	\newcommand{\pr}{\vec{r}}
	\newcommand{\pc}{\vec{c}}
	\newcommand{\ps}{\vec{s}}
	\newcommand{\pt}{\vec{t}}
	\newcommand{\pu}{\vec{u}}
	\newcommand{\pv}{\vec{v}}
	\newcommand{\pn}{\vec{n}}
	\newcommand{\pp}{\vec{p}}
	\newcommand{\pq}{\vec{q}}
	\newcommand{\pl}{\vec{l}}
	\newcommand{\vt}{\rm{vec}}
	\newcommand{\vx}{\mathbf{x}}
	\newcommand{\vy}{\mathbf{y}}
	\newcommand{\vu}{\mathbf{u}}
	\newcommand{\vv}{\mathbf{v}}
	\newcommand{\y}{\mathbf{y}}
	\newcommand{\vz}{\mathbf{z}}
	\newcommand{\T}{\top}
	\newcommand{\R}{\mathcal{R}}

	\newtheorem{Thm}{Theorem}[section]
	\newtheorem{Def}[Thm]{Definition}
	\newtheorem{Ass}[Thm]{Assumption}
	\newtheorem{Lem}[Thm]{Lemma}
	\newtheorem{Prop}[Thm]{Proposition}
	\newtheorem{Cor}[Thm]{Corollary}
	\newtheorem{example}[Thm]{Example}
	\newtheorem{remark}[Thm]{Remark}
	
	\section{Introduction}
	
	The multi-agent formation control problem considers the formation control problem of $n$ rigid bodies in the 3-D space.  It is a very important research problem in robotics research.  Examples of these $n$ rigid bodies can be autonomous mobile robots, or unmanned aerial vehicles (UAVs), or autonomous underwater vehicles (AUVs), or small satellites.   As the unit dual quaternion is a unified mathematical tool to describe the rigid body movement in the 3-D space, dual quaternion is used as a mathematical tool to study the formation control problem \cite{WHYZ12, WYL12}.  
	In multi-agent formation control, a desired relative configuration scheme may be given.   People need to know if this scheme is reasonable or not such that a feasible solution of configurations of these multi-agents exists.    Recently, by exploring the eigenvalue problem of dual Hermitian matrices, and its link with the unit gain graph theory, Qi and Cui \cite{QC24} proposed a cross-disciplinary approach to solve this relative configuration problem.   Cui, Lu, Qi and Wang \cite{CLQW24} investigated dual quaternion unit gain graphs.  {On the other hand, Chen, Hu, Wang and Guo \cite{CHWG24} investigated the application of dual quaternion matrices and dual quaternion unit gain graphs, to the precise formation flying problem of satellite clusters.}
	
	However, as discussed in \cite{WYL12}, the multi-agent formation control problem used to occur in a directed graph, i.e., there are some directed control relations among those multi-agents.   Hence, in this paper, we consider the multi-agent formation control problem in a directed graph.


	In the next section, some preliminary knowledge about the multi-agent formation control problem in a bidirected graph is given.

	In Section 3, we study the multi-agent formation control problem in a directed graph. {Such a directed graph is weighted by unit dual quaternions.   Thus, we call such a directed graph a unit dual quaternion directed graph (UDQDG).} We say a desired relative configuration scheme is reasonable if there is a desired formation vector {corresponding to} it.  We show that a desired relative configuration scheme is reasonable in a  {UDQDG} if and only if for any cycle in this {UDQDG}, the product of relative configurations of the forward arcs of the cycle, and inverses of relative configurations of the backward arcs of the cycle, is equal to $1$.  We then show that   a  {desired} relative configuration scheme in a  {connected UDQDG} {is reasonable if and only if} the dual quaternion Laplacian is similar to the unweighted Laplacian of the underlying directed graph.
	{Suppose that the desired  relative configuration scheme is reasonable, then} zero is an eigenvalue of these two Laplacian matrices, and {if in addition, the underlying graph has a directed spanning tree, then} a formation vector is a desired formation satisfying this reasonable desired relative configuration scheme if and only if it is a a unit dual quaternion vector and an eigenvector, corresponding to the zero eigenvalue,  of the dual quaternion Laplacian matrix.
	
	Based upon the discussion in Section 3, in Section 4, we present a direct method to test if a desired relative  relative configuration scheme is reasonable or not.   We call it a direct method, to distinguish the method presented in Section 5.
	
	{In} Section 5, we propose a gain graph method to determine {whether} a desired relative configuration scheme on a directed graph is reasonable or not.    This method can be applied to any unit weighted directed graph.   Under a certain condition, a given unit weighted directed graph can be converted to a unit gain graph, and the balance problem of that unit weighted directed graph is equivalent to the balance problem of this unit gain graph.
	
	We consider general non-unit weighted directed graphs in Section 6.

	{
		We present several numerical results for verifying whether the desired relative configuration scheme on a directed graph is reasonable or not in Section 7.
		
		Some final remarks are made in Section 8.}
	
	
	\section{Preliminaries}	
	
	\subsection{Unit Gain Graphs}
	
	Let $G=(V,E)$ be a bidirectional graph, where the vertex set $V =\{1,2,\dots,n\}$, and $(j, i) \in E$ as long as $(i, j) \in E$.   There is no multiple edges or loops.   Then we have $|E| = 2m$.
	The out-degree of a vertex $j$, denoted by $d_j=\text{deg}(j)$, is the number of vertices connecting to $j$.
	
	\medskip
	
	A  gain graph is a triple $\Phi=(G, \Omega,\varphi)$ consisting of an {\sl underlying graph} $G=(V,E)$, the {\sl gain group} $\Omega$ and the {\sl gain function} $\varphi: \vec{E}(G)\rightarrow \Omega$ such that $\varphi(i, j) = \varphi^{-1}(j, i)$.
	
	Examples of the gain group inclue  $\{ 1, -1 \}$, $\cgroup = \{ p \in \cset |  p \not = 0 \}$, and $\ucset = \{ p \in \cset | | p | = 1 \}$, etc.  If the gain group $\Omega$ consists of unit elements, then such a gain graph is called a  unit gain graph.
	If there is no  {confusion},  {then} we   simply denote the gain graph as $\Phi=(G, \varphi)$.
	
	A real unit gain graph is called a signed graph.  As we described earlier,
	there are researches on  complex unit gain graphs and  quaternion unit gain graphs.    In the study of multi-agent formation control, the theory of
	dual quaternion unit gain graphs was developed \cite{CLQW24, QC24}.
	
	A switching function is a function $\zeta: V\rightarrow \Omega$ that switches the  $\Omega$-gain graph $\Phi=(G,\varphi)$ to $\Phi^{\zeta}=(G,\varphi^\zeta)$, where
	\[\varphi^{\zeta}(i, j) = \zeta^{-1}(i)\varphi(i, j)\zeta(j).\]
	In this case, $\Phi$ and $\Phi^{\zeta}$ are switching equivalent, denoted by $\Phi\sim\Phi^{\zeta}$.
	We denote  the switching class of $\varphi$ as $[\Phi]$, which is the set of  gain graphs switching equivalent to $\Phi$.
	\bigskip
	
	The gain of a walk $W=\{ i_1, \dots, i_k \}$ is
	\begin{equation}\label{equ:gain_walk}
		\varphi(W)=\varphi(i_1, i_2)\varphi(i_2, i_3)\dots\varphi(i_{k-1},i_k).
	\end{equation}
	A walk $W$ is neutral if $\varphi(W)=1_{\Omega}$, where $1_{\Omega}$ is the identity of $\Omega$.
	An edge set $S\subseteq E$ is balanced if every {cycle} $C\subseteq S$ is neutral. A subgraph is balanced if  its edge set is balanced.
	
	{Let $\Phi=(G,\Omega, \varphi)$ be a unit gain graph.    Define $a_{ij} = \varphi(i, j)$ if $(i, j) \in E$, and $A_{ij} = 0$ otherwise. {Let  {$A(\Phi) = (a_{ij})$} be the adjacency matrix of $\Phi$, respectively. Define Laplacian matrix $L(\Phi)$ of $\Phi$ by
			$L(\Phi) =   D(\Phi)- A(\Phi)$, where
			$D(\Phi)$ is an $n\times n$ diagonal matrix with each diagonal element being the degree of the corresponding vertex {in its underlying graph
				$G$}.}
		
		Note that the adjacency matrix and Laplacian matrix of a unit gain graph are  Hermitian.   This is important as the dual quaternion Hermitian matrix has a nice spectral structure \cite{QL23}.
		
		{A} potential function is a function $\theta: V\rightarrow \Omega$ such that  for every edge $(i, j) \in E$,
		\begin{equation}\label{equ:potential}
			\varphi(i, j) = \theta(i)^{-1}\theta(j).
		\end{equation}
		We write $(G,1_{\Omega})$ as the $\Omega$-gain graph with all neutral edges.
		{The potential function} $\theta$ is not unique since for {any 
			$q\in\Omega$},  $\tilde\theta(v_i) = {\frac{q}{|q|}}\theta(v_i)$ for all $v_i\in V$ is also a potential function of $\Phi$.	
		
		The following theorem can be deduced from \cite{Za89}.
		
		\begin{Thm} \label{lem:balance}
			Let {$\Phi=(G,\Omega, \varphi)$ be a unit gain} 
			graph. Then the following are equivalent:
			\begin{itemize}
				\item[(i)] $\Phi$ is balanced.
				\item[(ii)] $\Phi\sim(G,1_{\Omega})$.
				\item[(iii)] $\varphi$ has a potential function.
			\end{itemize}
		\end{Thm}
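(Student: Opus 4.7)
The plan is to establish the three implications $(i) \Rightarrow (iii) \Rightarrow (ii) \Rightarrow (i)$ in a cycle. Assume without loss of generality that $G$ is connected; otherwise the construction below is carried out on each connected component.

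For $(iii) \Rightarrow (ii)$, I would take the potential $\theta$ and define the switching function $\zeta(v) = \theta(v)^{-1}$. Plugging into the definition of switched gain, I get
\[
\varphi^{\zeta}(i,j) = \zeta(i)^{-1}\varphi(i,j)\zeta(j) = \theta(i)\bigl(\theta(i)^{-1}\theta(j)\bigr)\theta(j)^{-1} = 1_{\Omega},
\]
which shows $\Phi \sim (G,1_{\Omega})$. For $(ii) \Rightarrow (i)$, I would compute the gain of a cycle $W = i_1 i_2 \dots i_k i_1$ under switching: the intermediate $\zeta(i_s)\zeta(i_s)^{-1}$ factors telescope and leave $\varphi^{\zeta}(W) = \zeta(i_1)^{-1}\varphi(W)\zeta(i_1)$. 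Since $(G,1_{\Omega})$ has every cycle neutral, this conjugate equals $1_{\Omega}$, and because $1_{\Omega}$ is fixed by conjugation, $\varphi(W) = 1_{\Omega}$. Hence $\Phi$ is balanced. Both steps are short and essentially algebraic; the only mild care needed is tracking non-commutativity of $\Omega$ (permitted for e.g.\ the unit quaternion and unit dual quaternion groups of interest), which is handled by the fact that balance is defined via the gain being the identity, a conjugation-invariant condition.

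The main obstacle is $(i) \Rightarrow (iii)$, which requires constructing a potential function from scratch out of the balance hypothesis. The plan is the classical spanning-tree construction. I would fix a spanning tree $T$ of $G$ rooted at a vertex $r$, set $\theta(r) = 1_{\Omega}$, and for every other vertex $v$ let $P_v$ be the unique $r$-to-$v$ path in $T$ and define $\theta(v) = \varphi(P_v)$. For any tree edge $(i,j)$ with $j$ a child of $i$, the concatenation $P_j = P_i \cdot (i,j)$ gives $\theta(j) = \theta(i)\varphi(i,j)$, hence $\varphi(i,j) = \theta(i)^{-1}\theta(j)$ as required.

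The delicate point is verifying the potential identity for non-tree edges $(i,j)$. Here the closed walk $C = P_i \cdot (i,j) \cdot P_j^{-1}$ from $r$ to $r$ must have trivial gain, for then $\theta(i)\varphi(i,j)\theta(j)^{-1} = 1_{\Omega}$ yields $\varphi(i,j) = \theta(i)^{-1}\theta(j)$. Balance of $\Phi$ only asserts that every \emph{simple cycle} is neutral, so an auxiliary lemma is needed: in a balanced unit gain graph every closed walk has trivial gain. I would prove this by induction on the length of the closed walk, decomposing it at its first repeated vertex into a shorter closed walk and a simple closed subwalk, using the identity $\varphi(W_1 W_2) = \varphi(W_1)\varphi(W_2)$ and the fact that conjugation preserves the identity to move the simple cycle to a base vertex. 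Non-commutativity of $\Omega$ is what makes this step nontrivial, but the conjugation invariance of $1_{\Omega}$ ensures the induction goes through. Once this lemma is in hand, the $(i) \Rightarrow (iii)$ implication closes, completing the cycle.
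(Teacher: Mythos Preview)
Your argument is correct. The implications $(iii)\Rightarrow(ii)$ and $(ii)\Rightarrow(i)$ are handled cleanly, and the spanning-tree construction for $(i)\Rightarrow(iii)$ together with the auxiliary lemma that every closed walk in a balanced unit gain graph has trivial gain is the standard route; your induction on walk length (splitting off a simple cycle or a backtrack at the first repeated vertex) goes through. One small remark: once the inner simple cycle $C$ has $\varphi(C)=1_\Omega$, the factorization $\varphi(W)=\varphi(P_1)\varphi(C)\varphi(P_2)=\varphi(P_1P_2)$ already collapses without any conjugation, so the appeal to conjugation-invariance of $1_\Omega$ is harmless but unnecessary at that point.

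As for comparison: the paper does not actually prove this theorem---it simply attributes it to Zaslavsky~\cite{Za89}. However, the paper does prove closely analogous statements in the directed setting (Theorem~\ref{Reasonable} and the WDG theorem in Section~6), and there the $(i)\Rightarrow(iii)$ direction is organized differently: instead of fixing a spanning tree and invoking a closed-walk lemma, the paper inducts on the cycle rank $M=m-n+1$, deleting one edge of a cycle at each step, applying the inductive hypothesis to the smaller graph, and then using the balance condition on that single cycle to recover the potential identity on the deleted edge. Your approach front-loads the work into the closed-walk lemma and then verifies all non-tree edges at once; the paper's approach avoids that lemma by peeling off one cycle at a time. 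Both are equally valid, and in a non-commutative gain group neither is materially simpler than the other.
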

		
		{Let $A$ and $L$ be the adjacency and Laplacian matrices of $\Phi$.
			Recently, it was  shown in \cite{CLQW24} that  if $\Phi$ is balanced, then $A$ and $L$  are similar with the adjacency and Laplacian matrices of the underlying graph $G$, respectively, and
			its spectrum
			$\sigma_A(\Phi)$ consists of $n$ real numbers, while its Laplacian spectrum $\sigma_L(\Phi)$ consists of one zero and $n-1$ positive numbers.
			Furthermore,  there is $\sigma_A(\Phi)=\sigma_A(G)$ and $\sigma_L(\Phi)=\sigma_L(G)$.}
		
		\subsection{Unit Dual Quaternions}
		
		Denote the conjugate of a quaternion $q$ by $q^*$.  Let $\mathbb Q$ be the ring of quaternions.
		A {\bf dual quaternion} $\hat q = q_s + q_d\epsilon$ consists of two quaternions $q_s$, the standard part of $\hat q$, and $q_d$, the dual part of $\hat q$, with $\epsilon$ as the infinitesimal unit, satisfying $\epsilon \not = 0$ but $\epsilon^2 = 0$.    If both $q_s$ and $q_d$ are real numbers, then the dual quaternion $\hat q$ is called a dual number.   The magnitude of a dual quaternion $\hat q = q_s + q_d\epsilon$ is defined  as
		\begin{equation} \label{e1}
			|\hat q| := \left\{ \begin{aligned} |q_s| + {(q_sq_d^*+q_d q_s^*) \over 2|q_s|}\epsilon, & \ {\rm if}\  q_s \not = 0, \\
				|q_d|\epsilon, &  \ {\rm otherwise},
			\end{aligned} \right.
		\end{equation}
		which is a dual number.
		
		A dual quaternion $\hat q = q_s + q_d\epsilon$
		is called a {\bf unit dual quaternion} (UDQ) if $|\hat q| = 1$, i.e.,  $q_s$ is a unit quaternion and $q_sq_d^* + q_dq_s^* =0$. Let $\hat {\mathbb Q}$ be the ring of quaternions.
		The set of unit dual quaternions is denoted as $\hat {\mathbb U}$.
		A UDQ  $\hat q\in\hat {\mathbb U}$ can represent the {\bf movement} of a rigid body as
		\begin{equation} \label{e3}
			\hat q = q_s + {\epsilon \over 2}q_s  p^b,
		\end{equation}
		where {$q_s$} is a unit quaternion, representing the rotation of the rigid body, and $p^b = [0, \pp\,^b]$  and ${\pp\,^b}$ representing the {\bf translation} of the rigid body, and the superscript $b$ relating to the body frame, which is attached to the rigid body.  We may use {$\hat q$} to represent the {\bf configuration} of the rigid body.  Then {$q_s$} represents the {\bf attitude} of the rigid body, and $p^b$  {represents} the {\bf position} of the rigid body.
		
		The set of unit dual quaternions is defined as $\udqset$.
		
		{\subsection{Formation Control in a Bidirectional Graph}}
		
		Consider the formation control problem of $n$ rigid bodies.   These $n$ rigid bodies can be autonomous mobile robots, or unmanned aerial vehicles (UAVs), or autonomous underwater vehicles (AUVs), or small satellites.
		Then these $n$ rigid bodies can be described by a graph $G=(V,E)$ with $n$ vertices and $m$ edges.  For two rigid bodies $i$ and $j$ in $V$, if rigid body $i$ can sense rigid body $j$, then edge $(i, j) \in E$.    {In this subsection, we} assume that any pair of these rigid bodies are mutual visual, i.e., rigid body $i$ can sense rigid body $j$ if and only if rigid body $j$ can sense rigid body $i$.   Furthermore, we assume that $G$ is connected in the sense that for any node pair $i$ and $j$, either $(i, j) \in E$, or there is a path connecting $i$ and $j$ in $G$, i.e., there are nodes $i_i, \dots, i_k \in V$ such that $(i, i_1), \dots, (i_k, j) \in E$.
		
		Suppose that for each $(i, j) \in E$, we have a desired relative configuration  {from rigid body $i$ to $j$ as} $q_{d_{ij}} \in \hat{\mathbb U}$.
		
		We say that the desired relative  {configuration}  $\left\{ q_{d_{ij}} : (i, j) \in E \right\}$ is {\bf reasonable} if and only if there is a desired formation $\vdq q_d\in\hat{\mathbb U}^n$, which satisfies
		\begin{equation} \label{desrel}
			q_{d_{ij}} = q_{d_i}^*q_{d_j},
		\end{equation}
		for all $(i, j) \in E$.
		
		Thus, a meaningful application problem in formation control is to verify  {whether} a given desired relative configuration scheme is reasonable or not.

		Define $\varphi(i, j) = q_{d_{ij}}$ for the formation control problem here.
		Then we may apply unit gain graph {theory.}
		{Denote $\Phi=(G,\udqset,\varphi)$. By Theorem~\ref{lem:balance},     the desired relative configuration  is reasonable if and only if $\Phi$  is balanced.}

		
		
		In \cite{QC24}, Qi and Cui proposed verifying the reasonableness of the desired relative configuration  
		by computing the smallest eigenvalue of the Laplacian matrix.  It was show there that a given desired relative configuration scheme is reasonable or not if and only if the corresponding dual quaternion unit gain graph is balanced.   Furthermore, they proved the following theorem.
		
		\begin{Thm}\label{thm:balanced}
			Let $\Phi=(G,\udqset, \varphi)$ be a dual quaternion unit gain graph with $n=|V|$, $m=|E|$. Suppose that $\varphi(i, j)=1$ for all $(i, j)\in E$ and  $G$ has $t$ subgraphs $G_i=(V_i,E_i)$, $n=n_1+\dots+n_t$, $V_i=\{n_1+\dots+n_{i-1}+1, \dots,n_1+\dots+n_i\}$, and {$n_0=0$.} Let   $L$ be the Laplacian {matrix} of $\Phi$.  Then $\Phi$ is balanced if and only if  the following conditions hold simultaneously:
			
			(i) $L$  has $t$ zero eigenvalues that are the smallest eigenvalue of $L$;
			
			(ii)  their  eigenvectors $\vx_i\in\dqset^n$ satisfies {$|x_i(j)|=\frac{\sqrt{n_i}}{n_i}$} for $j=n_1+\dots+n_{i-1}+1, \dots,n_1+\dots+n_i$,   $|x_i(j)|=0$ otherwise;
			
			(iii) $Y^*LY$ is equal to the Laplacian matrix of the underlying graph $G$, where $Y=\text{diag}(\vy)\in\dqset^{n\times n}$, $\vx_a=\vx_1+\dots+\vx_t$, and $y(i) = \frac{x_a(i)}{|x_a(i)|}$.
		\end{Thm}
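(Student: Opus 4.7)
The plan is to use Theorem~\ref{lem:balance}---the potential-function characterization of balance---as a bridge between balance of $\Phi$ and a diagonal unitary conjugation carrying $L$ to $L(G)$. Once that bridge is in place, all three conclusions follow from a direct entrywise computation combined with the well-known null-space structure of the Laplacian of a graph with $t$ connected components.

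For the direction ``balanced $\Rightarrow$ (i)-(iii)'', I would apply Theorem~\ref{lem:balance} on each component $G_i$ to obtain a potential function $\theta:V\to\udqset$ with $\varphi(i,j)=\theta(i)^{-1}\theta(j)$ on every edge. Setting $Y_\theta=\operatorname{diag}(\theta(1),\ldots,\theta(n))$, the identity $\hat q^{-1}=\hat q^{*}$ valid on $\udqset$ makes $Y_\theta$ unitary, and a direct entry computation gives
\[
(Y_\theta L Y_\theta^{*})_{ij}=\theta(i)(-\varphi(i,j))\theta(j)^{*}=-\theta(i)\theta(i)^{-1}\theta(j)\theta(j)^{-1}=-1
\]
on every edge, while the real-scalar degrees on the diagonal are untouched; hence $Y_\theta L Y_\theta^{*}=L(G)$. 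This unitary similarity gives (i), since $L(G)$ of a graph with $t$ components has exactly $t$ zero eigenvalues at the bottom of its spectrum, whose null space is spanned by the normalized component indicators $\mathbf{w}_i$ with $w_i(j)=1/\sqrt{n_i}$ for $j\in V_i$ and $0$ otherwise. Pulling back via $\vx_i=Y_\theta^{*}\mathbf{w}_i$ yields zero eigenvectors of $L$ satisfying $|x_i(j)|=|\theta(j)^{*}|/\sqrt{n_i}=\sqrt{n_i}/n_i$ for $j\in V_i$, i.e., (ii). For (iii), $\vx_a=\vx_1+\cdots+\vx_t$ then has $x_a(j)=\theta(j)^{*}/\sqrt{n_{i(j)}}$, so $y(j):=x_a(j)/|x_a(j)|=\theta(j)^{*}$, and $Y^{*}LY=Y_\theta L Y_\theta^{*}=L(G)$ by construction.

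For the converse, condition (ii) ensures $|x_a(j)|=1/\sqrt{n_{i(j)}}\neq 0$, so $y(j)$ is well defined with $|y(j)|=1$, making $Y$ in (iii) a diagonal unitary matrix. Then $L=Y L(G) Y^{*}$, and comparing an off-diagonal entry on an edge $(i,j)$ gives $\varphi(i,j)=-L_{ij}=y(i)y(j)^{*}$. Setting $\theta(j)=y(j)^{*}\in\udqset$ yields $\theta(i)^{-1}\theta(j)=y(i)y(j)^{*}=\varphi(i,j)$, exhibiting $\theta$ as a potential function; Theorem~\ref{lem:balance} then delivers balance of $\Phi$.

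The main delicate point is to keep the dual quaternion arithmetic straight: every cancellation above relies on $\hat q^{*}=\hat q^{-1}$ on $\udqset$, which in turn rests on $\hat q\hat q^{*}=\hat q^{*}\hat q=1$ and $|\hat q^{*}|=1$ for $\hat q\in\udqset$. A smaller subtlety is the starring that relates the two natural diagonal factors produced by the two directions, handled via the substitution $\theta=y^{*}$. Once the similarity $Y^{*}LY=L(G)$ is established, conditions (i) and (ii) reduce to the standard fact that the Laplacian of a graph with $t$ connected components has a $t$-dimensional null space spanned by normalized component indicators, so no further spectral work is required.
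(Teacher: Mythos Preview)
The paper does not prove this theorem: it is quoted from \cite{QC24} as background (``Furthermore, they proved the following theorem''), with no argument supplied here. So there is no proof in the present paper to compare your proposal against.

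That said, your argument is sound and is the natural one. In the forward direction you correctly apply Theorem~\ref{lem:balance} componentwise to obtain a potential $\theta:V\to\udqset$, and the key identity $(Y_\theta L Y_\theta^{*})_{ij}=\theta(i)(-\varphi(i,j))\theta(j)^{*}=-1$ on edges is valid because the two cancellations $\theta(i)\theta(i)^{-1}=1$ and $\theta(j)\theta(j)^{*}=1$ occur in the correct order despite non-commutativity; the real diagonal entries $d_i$ commute and are untouched. Pulling the real normalized component indicators back through $Y_\theta^{*}$ and then normalizing recovers exactly the $Y$ of (iii). For the converse, (ii) forces each $|x_a(j)|$ to be the positive real number $1/\sqrt{n_{i(j)}}$, so $y(j)\in\udqset$ is well defined; then (iii) gives $\varphi(i,j)=y(i)y(j)^{*}=\theta(i)^{-1}\theta(j)$ with $\theta:=y^{*}$, and Theorem~\ref{lem:balance} closes the loop.

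Two minor remarks. First, the printed hypothesis ``$\varphi(i,j)=1$ for all $(i,j)\in E$'' appears to be a misprint carried over from the source (taken literally it makes $\Phi$ trivially balanced and $L=L(G)$); you are right to argue the intended statement without that clause. Second, the sentence ``This unitary similarity gives (i)'' implicitly relies on the spectral theory of dual quaternion Hermitian matrices from \cite{QL23}: one needs that a unitary conjugate of $L$ has the same eigenvalues as $L$, and that for the real symmetric $L(G)$ these are the ordinary Laplacian eigenvalues. It would be worth making that dependence explicit.
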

		
		Thus, an efficient cross-disciplinary method to test the reasonableness of a multi-agent relative configuration scheme was presented.   
		
		{However, when the rigid bodies are not mutual visual,  it remains uncertain how to verify the reasonableness of the desired relative configuration.
			This motivates us to study unit dual quaternion directed graphs.
		}

		\section{Unit Dual Quaternion Directed Graphs and Formation Control}
		
		In this section, we consider the formation control problem of $n$ rigid bodies  {in} a {directed}  {(digraph)} $G=(V,E)$ with $n$  {nodes} and $m$  {arcs}.  Mathematically, we express this problem as a unit dual quaternion directed graph problem.

		For two rigid bodies $i$ and $j$ in $V$, if rigid body $i$ can sense rigid body $j$, then  {arc} $(i, j) \in E$,  {where $i$ and $j$ are the tail and head of the arc $(i, j)$ respectively}.
		{It is possible that $(i, j) \in E$ but $(j , i) \not \in E$, i.e.,} 
		rigid body $i$ can sense rigid body $j$  but rigid body $j$ cannot sense rigid body $i$.
		
		A digraph $G$ is  {weakly} connected or simply called connected if there is an undirected path between any pair of vertices, or equivalently, the underlying undirected graph of $G$ is   	connected.
		
		{Suppose that there is a cycle $C = \{ i_1, i_2, \dots, i_k, {i_{k+1}} \equiv i_1 \}$ in a directed graph $G$.   Then $C$ contains $k$ arcs in $E$.   Suppose that $1 \le j \le k$.  If arc $\left( i_j, i_{j+1}\right) \in E$ is a member of $C$, then we say that it is a {\bf forward arc} in $C$, and denote that $\left(i_j, i_{j+1}\right) \in C$.   If arc $\left( i_{j+1}, i_j\right) \in E$ is a member of $C$, then we say that it is a {\bf backward arc} in $C$, and denote $\left( i_{j+1}, i_j\right) \in C$.}

		Suppose that for each $(i, j) \in E$, we have a desired relative configuration  {from rigid body $i$ to $j$ as}  {a UDQ} $\hat q_{d_{ij}} \in \hat{\mathbb U}$.   We say that the desired relative configuration scheme $\left\{ \hat q_{d_{ij}} : (i, j) \in E \right\}$ is {\bf reasonable} if and only if there is a desired formation $\vdq q_d\in {\hat{\mathbb U}^n}$, which satisfies
		\begin{equation} \label{desrel}
			\hat q_{d_{ij}} = \hat q_{d_i}^*\hat q_{d_j},
		\end{equation}
		for all $(i, j) \in E$.
		
		We may consider $\hat q_{d_{ij}}$ as a weight of the arc $(i, j)$.   Then we have a UDQ directed graph (UDQDG)  $\Phi = (G, \udqset, \varphi)$, where $G = (V, E)$ and $\varphi(i, j) = \hat q_{d_{ij}}$ for $(i, j) \in E$.
		
		The Laplacian matrix of  a {UDQDG}  {$\Phi$} is  defined by
		\begin{equation}\label{Laplaican_mat}
			\dqm L =   {D-\dqm A},
		\end{equation}
		where  {$D$} is a diagonal real matrix  {whose}  $i$-th diagonal element  is equal to the  {out}-{degree $d_i$ (the number of  {arcs  {going} out from} $i$)} of  the $i$-th vertex:
		and ${\dqm A}=(\dq a_{ij})$,
		\begin{equation*}
			\dq a_{ij} = \left\{
			\begin{array}{cl}
				{\hat q_{d_{ij}}},  &  \text{if } (i,j)\in E,\\
				0, & \text{otherwise},
			\end{array}
			\right.
		\end{equation*}
		where $\hat q_{d_{ij}}$ is the desired relative configuration on the edge $(i, j) \in E$.   Now
		for $(i, j) \in E$, let $\dq a_{ij}\in\hat{\mathbb U}$ be the desired relative configuration of the rigid bodies $i$ and $j$. Then {$\dqm A$} is
		the desired relative configuration adjacency matrix.

		Next, we present an example.

		\begin{example} \label{ex:2.1} Consider the two figures in Fig.~\ref{fig:DST3}. For the  the directed  tree on the left,  we have {$G^{(1)} = (V^{(1)}, E^{(1)})$,  $V^{(1)}=\{1,2,3\}$, $E^{(1)}=\{(2,1), (3,1)\}$,} and
			\begin{equation}\label{example:DST}
				\hat A^{(1)} = \begin{bmatrix}
					0 & 0 & 0 \\
					\hat q_{d_{21}} & 0 & 0\\
					\hat q_{d_{31}}& 0 & 0
				\end{bmatrix},\quad  	\hat L^{(1)} = \begin{bmatrix}
					0 & 0 & 0 \\
					-\hat q_{d_{21}} & 1 & 0\\
					-\hat q_{d_{31}}& 0 & 1
				\end{bmatrix}.
			\end{equation}
			For the  directed cycle on the right,  we have {$G^{(2)} = (V^{(2)}, E^{(2)})$,  $V^{(2)}=\{1,2,3\}$, $E^{(2)}=\{(1,3),(2,1), (3,2)\}$,} and
			\begin{equation}\label{example:cycle}
				\hat A^{(2)} = \begin{bmatrix}
					0 & 0 & \hat q_{d_{13}} \\
					\hat q_{d_{21}} & 0 & 0\\
					0 &\hat q_{d_{32}} & 0
				\end{bmatrix},\quad  	\hat L^{(2)} = \begin{bmatrix}
					1 & 0 & -\hat q_{d_{13}} \\
					-\hat q_{d_{21}} & 1 & 0\\
					0 &-\hat q_{d_{32}} & 1
				\end{bmatrix}.
			\end{equation}
		\end{example}
		
		\begin{figure}\label{fig:DST3}
			\begin{center}
				\includegraphics[width=0.3\linewidth]{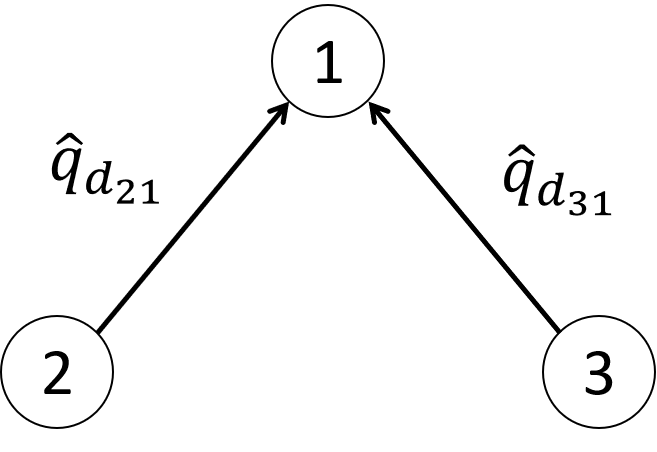} \qquad
				\includegraphics[width=0.3\linewidth]{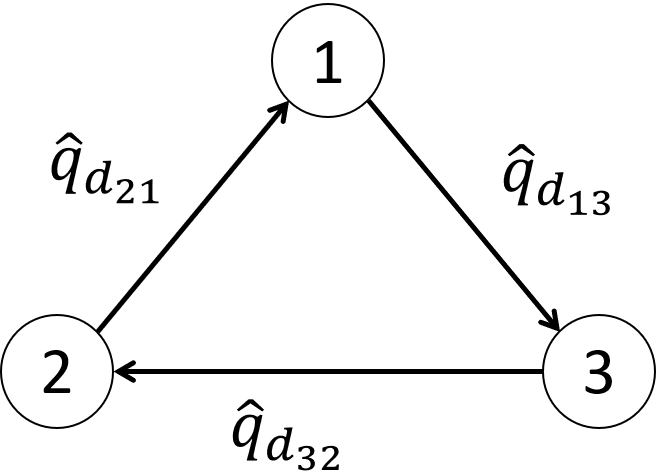}
			\end{center}
			\caption{Two directed graphs. Left: A directed   tree with three vertices. Right: A directed cycle with three vertices.}
		\end{figure}

		Note that,  for a directed graph, neither  {$\dqm A$}  {nor} {$\dqm L$} is a dual quaternion Hermitian matrix. In this case, their spectral analysis {is}  complicated.

		The spectral theory of the Laplacian matrix of a directed graph {$G$} is important.  Let $L$ be the unweighted Laplacian  matrix of $G$.
		The following result from \cite{RB08} is quite useful in our analysis.
		\begin{Lem}[\cite{RB08}, Corollary 2.5] \label{Lem:zeroeig_G}
			The nonsymmetrical Laplacian matrix  of a directed graph has a simple zero eigenvalue with an associated eigenvector ${\bf 1}$  and all of the other eigenvalues are in the open right half plane if and only if the directed graph has a directed spanning tree.
		\end{Lem}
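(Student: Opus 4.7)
The plan is to combine three standard ingredients: the row–sum structure of $\dqm L$, a Gershgorin localization, and the matrix–tree theorem for digraphs (with a condensation argument supplying the converse). Since the lemma is purely about the unweighted real Laplacian, no dual quaternion machinery enters — only the directed–graph combinatorics.

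First I would record the trivial half. By definition $\dqm L = D - \dqm A$ has row sums equal to zero, so $\dqm L\mathbf{1}=\mathbf{0}$, confirming that $0$ is always an eigenvalue with eigenvector $\mathbf{1}$. The real content is to decide when $0$ is algebraically simple and when the remaining eigenvalues all lie strictly in the open right half-plane. Next I would apply Gershgorin's disc theorem: row $i$ of $\dqm L$ has diagonal entry $d_i$ and off-diagonal absolute row sum also equal to $d_i$, so every eigenvalue $\lambda$ satisfies $|\lambda-d_i|\le d_i$ for some $i$. A quick geometric check shows each such disc lies in the closed right half-plane and meets the imaginary axis only at the origin, so every nonzero eigenvalue of $\dqm L$ automatically has strictly positive real part. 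The lemma thus reduces to showing that the algebraic multiplicity of $0$ as a root of $\det(\lambda I-\dqm L)$ equals $1$ if and only if $G$ has a directed spanning tree.

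For the $(\Leftarrow)$ direction I would use the directed matrix–tree theorem. Expanding the characteristic polynomial, the coefficient of $\lambda$ is (up to sign) the sum of the $(i,i)$-principal minors of $\dqm L$, and each such minor counts the spanning arborescences of $G$ rooted at vertex $i$ (in the direction matching the adopted Laplacian convention). If $G$ possesses a directed spanning tree rooted at some $r$, then the corresponding minor is strictly positive while all other principal minors are non-negative, so this coefficient is nonzero and $0$ is forced to be a simple root.

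For the $(\Rightarrow)$ direction I would argue contrapositively via the condensation DAG of $G$ into strongly connected components. Rewriting $\dqm L\vv=\mathbf{0}$ as the harmonic averaging condition $d_i v_i=\sum_{j:(i,j)\in E} v_j$, I would analyze the terminal (respectively source, depending on convention) components of the condensation: if no vertex can reach every other one, then there are at least two such components, and assigning distinct constants on the closures of each one produces two linearly independent harmonic functions, hence $\dim\ker\dqm L\ge 2$ and $0$ is not simple. The main obstacle I anticipate is making this last step clean: naive indicators of components are generally \emph{not} null vectors because of cross-arcs between SCCs, so one must either work along the partial order on the condensation (propagating the value at a terminal SCC back through its ancestors via the averaging rule) or, equivalently, pin down the right convention (out-degree vs.\ in-degree Laplacian, and "rooted out" vs.\ "rooted in" arborescence) so that the matrix-tree step and the condensation step are compatible. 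Once the convention is fixed, everything else is routine.
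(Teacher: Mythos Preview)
The paper does not prove this lemma at all: it is simply quoted as Corollary~2.5 of Ren and Beard~\cite{RB08} and used as a black box. Your outline therefore supplies what the paper deliberately omits, and the route you sketch --- zero row sums, Gershgorin localization to reduce the claim to simplicity of the zero eigenvalue, then the directed matrix--tree theorem for one direction and a condensation/harmonic-function argument for the other --- is the standard self-contained proof one finds in~\cite{RB08} and the surrounding consensus literature. Your diagnosis of the only delicate point is also accurate: with the paper's out-degree Laplacian and its convention that a ``directed spanning tree'' means every vertex has a directed path \emph{to} the distinguished node~$r$, the relevant principal minors in the matrix--tree step count in-arborescences rooted at~$i$, and in the converse the harmonic relation $d_i v_i=\sum_{(i,j)\in E}v_j$ propagates values \emph{upstream} from the sink SCCs of the condensation, so that two sink SCCs yield two independent null vectors. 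Once this convention is fixed the argument goes through cleanly; nothing beyond irreducible diagonal dominance is needed to show that the values on a non-sink SCC are uniquely determined by the already-fixed downstream values.
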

		{Here, $G$ has a directed spanning tree if there exists a node $r$ (a leaf) such that all
			other nodes can be linked to $r$ via a directed path.
			In \cite{RB08}, $r$ is a root since the entries of the adjacency matrix are nonzero if $(j,i)\in E$, which is different from ours. }
		
		{Note that Lemma~\ref{Lem:zeroeig_G} may not hold for Laplacian matrices of UDQDGs. For instance, let $\hat q_{d_{13}}=[0,1,0,0]$,  $\hat q_{d_{21}}=[0,0,1,0]$, and  $\hat q_{d_{32}}=[0,0,0,1]$ in \eqref{example:cycle}, then  we have $\mathrm{rank}(\hat L^{(2)})=2$. If we change  $\hat q_{d_{32}}$ to any other unit dual quaternion numbers not equal $[0,0,0,1]$, then we have $\mathrm{rank}(\hat L^{(2)})=3$.
		}

		{By \eqref{desrel}, we have the following result.}
		\begin{Lem}
			{Assume that} the desired relative configuration $\left\{ \hat q_{d_{ij}} : (i, j) \in E \right\}$ is reasonable.   Suppose both $(i, j)$ and $(j, i)$ are in $E$.   Then we have
			\begin{equation} \label{symmetry}
				\hat q_{d_{ij}} = \hat q_{d_{ji}}^*.
			\end{equation}
		\end{Lem}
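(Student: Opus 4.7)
The plan is to unfold the definition of \emph{reasonable} on both arcs $(i,j)$ and $(j,i)$, and then combine the two identities using the standard conjugation rules for dual quaternions. By hypothesis there exists a desired formation $\vdq q_d \in \udqset^n$ satisfying \eqref{desrel}. Since both $(i,j)$ and $(j,i)$ belong to $E$, applying \eqref{desrel} to each of them yields the two relations $\hat q_{d_{ij}} = \hat q_{d_i}^*\hat q_{d_j}$ and $\hat q_{d_{ji}} = \hat q_{d_j}^*\hat q_{d_i}$ simultaneously.

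Next, I would take the conjugate of the second relation. The key facts I would invoke are that dual-quaternion conjugation is an involution, i.e., $(\hat a^*)^* = \hat a$, and that it reverses products, i.e., $(\hat a\hat b)^* = \hat b^*\hat a^*$. These properties are inherited from quaternion conjugation and extend to dual quaternions because $\epsilon$ commutes with every quaternion and the dual conjugate is defined componentwise on the standard and dual parts. Applying them to $\hat q_{d_{ji}}^* = (\hat q_{d_j}^*\hat q_{d_i})^*$ gives $\hat q_{d_i}^*\hat q_{d_j}$, which is exactly $\hat q_{d_{ij}}$ by the first relation, establishing \eqref{symmetry}.

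There is no real obstacle here; the content of the lemma is essentially that the product structure $\hat q_i^*\hat q_j$ is Hermitian-symmetric in $(i,j)$. The only care needed is to confirm the identities $(\hat a^*)^* = \hat a$ and $(\hat a\hat b)^* = \hat b^*\hat a^*$ in the dual quaternion setting, which are standard and implicitly used throughout the paper whenever $*$ is written. No assumption on the formation $\vdq q_d$ being nondegenerate is required, so the lemma holds for every reasonable scheme.
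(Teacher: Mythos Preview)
Your proof is correct and follows essentially the same approach as the paper, which simply notes that the result is immediate from \eqref{desrel}; you have merely spelled out the conjugation identities $(\hat a\hat b)^* = \hat b^*\hat a^*$ and $(\hat a^*)^* = \hat a$ that make this immediate.
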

		{\begin{proof}
				This result could be seen directly by \eqref{desrel}.
		\end{proof}}

		{The above lemma is only a necessary condition for the desired relative configuration scheme being reasonable.
			Next, we show an equivalent condition in the following theorem.}
		
		\begin{Thm}  \label{Reasonable} Suppose that we have a {UDQDG} {$\Phi = (G, \udqset, \varphi)$, where} $G = (V, E)$ {that has a directed spanning tree.}
			The desired relative configuration $\left\{ \hat q_{d_{ij}} {\equiv \varphi(i, j)}: (i, j) \in E \right\}$ is {\bf reasonable} if and only if
			for any cycle $C = \left\{ i_1, \dots, i_k, i_{k+1} \equiv i_1 \right\}$ of $G$, we have
			\begin{equation} \label{cycle}
				\prod_{1 \le j \le k} \hat p_{i_ji_{j+1}} = 1,
			\end{equation}
			where $\hat p_{i_ji_{j+1}} = \hat q_{d_{i_ji_{j+1}}}$ if $\left(i_j, i_{j+1}\right) \in E$, and
			$\hat p_{i_ji_{j+1}} = \hat q_{d_{i_ji_{j+1}}}^*$ if $\left(i_{j+1}, i_j\right) \in E$.
			
			Furthermore, let {${\vdq q}_0$} be a desired formation satisfying (\ref{desrel}).  Then the set of all the desired formations satisfying (\ref{desrel}) is
			\begin{equation} \label{alldesiredformation}
				{[\vdq q_0]=}\left\{ \hat c \vdq q_0 : \hat c \in \hat{\mathbb U} \right\}.
			\end{equation}
		\end{Thm}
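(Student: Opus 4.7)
The plan is to prove the equivalence by tracking how \eqref{desrel} behaves under composition and UDQ conjugation along a cycle, and then to upgrade a purely arc-local relation to a global vertex assignment via weak connectivity (guaranteed by the directed spanning tree hypothesis). For necessity, fix a realization $\vdq q_d$ of the scheme. For a forward arc $(i_j,i_{j+1})\in E$ in $C$, one has $\hat p_{i_j i_{j+1}} = \hat q_{d_{i_j}}^* \hat q_{d_{i_{j+1}}}$ directly from \eqref{desrel}. For a backward arc $(i_{j+1},i_j)\in E$, I would use the UDQ anti-homomorphism $(\hat a\hat b)^* = \hat b^* \hat a^*$ together with $\hat q^{**}=\hat q$ to write
\[
\hat p_{i_j i_{j+1}} = \hat q_{d_{i_{j+1}i_j}}^* = \bigl(\hat q_{d_{i_{j+1}}}^* \hat q_{d_{i_j}}\bigr)^* = \hat q_{d_{i_j}}^* \hat q_{d_{i_{j+1}}}.
\]
Thus every factor has the same telescoping form, and since $\hat q_{d_v} \hat q_{d_v}^* = 1$ for UDQs, the product collapses to $\hat q_{d_{i_1}}^* \hat q_{d_{i_{k+1}}} = 1$.

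For sufficiency, the existence of a directed spanning tree implies weak connectivity of $G$. I would pick any root $r\in V$, set $\hat q_{d_r}:=1$, and for each $v\in V$ choose an undirected walk from $r$ to $v$, defining $\hat q_{d_v}$ as the product of the corresponding factors $\hat p$ along the walk under the forward/backward convention of the theorem. The crux is well-definedness: two such walks differ by a closed walk, and a closed walk decomposes into cycles in the sense of the theorem. Applying \eqref{cycle} to each cycle, together with the elementary identity $\hat p_{wu} = \hat p_{uw}^*$ (which follows from \eqref{cycle} applied to the $2$-cycle $\{u,w,u\}$ when both $(u,w),(w,u)\in E$, and is immediate by definition otherwise) forces the product around any closed walk to be $1$. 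Hence $\hat q_{d_v}\in\udqset$ is independent of the walk, and extending a walk $r\to i$ by the arc $(i,j)\in E$ yields $\hat q_{d_j} = \hat q_{d_i} \hat q_{d_{ij}}$, i.e., \eqref{desrel}.

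For the characterization \eqref{alldesiredformation}, I would take two realizations $\vdq q_0,\vdq q_1$ and, for any $(i,j)\in E$, rearrange $\hat q_{0_i}^* \hat q_{0_j} = \hat q_{1_i}^* \hat q_{1_j}$ into $\hat q_{1_i} \hat q_{0_i}^* = \hat q_{1_j} \hat q_{0_j}^*$ by multiplying on the left by $\hat q_{1_i}$ and on the right by $\hat q_{0_j}^*$ and using $\hat q_{0_i}\hat q_{0_i}^*=1$. This equality propagates along edges in both traversal directions (by conjugation), so by weak connectivity the value $\hat c := \hat q_{1_v} \hat q_{0_v}^*$ is constant in $v$; thus $\hat q_{1_v} = \hat c \hat q_{0_v}$ with $\hat c\in\udqset$. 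The reverse inclusion is immediate: for any $\hat c\in\udqset$, $(\hat c \hat q_{0_i})^*(\hat c \hat q_{0_j}) = \hat q_{0_i}^* \hat q_{0_j} = \hat q_{d_{ij}}$. The main obstacle I anticipate is purely bookkeeping: one must handle the forward/backward classification carefully so that the cycle condition applies uniformly to arbitrary closed walks (including the symmetry-on-reversal identity $\hat p_{wu} = \hat p_{uw}^*$); once these UDQ-algebraic facts are in place, the remainder is a standard telescoping/gauge construction familiar from unit gain graph theory.
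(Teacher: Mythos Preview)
Your argument is correct, but it follows a different route from the paper's. For sufficiency, the paper argues by induction on the cyclomatic number $M=m-n+1$ of the underlying undirected graph: the base case $M=0$ is a spanning tree, on which the formation is built from a root outward via \eqref{desrel}; the inductive step deletes one arc from a cycle, invokes the hypothesis on the smaller graph, and then uses \eqref{cycle} to recover \eqref{desrel} on the deleted arc. Your approach instead constructs $\hat q_{d_v}$ directly as a path product from a fixed root and establishes well-definedness by showing every closed walk has product $1$. Both are standard gain-graph techniques; yours is more explicitly constructive and makes the potential-function viewpoint transparent, at the cost of the closed-walk-to-cycle reduction, which requires some care in the noncommutative UDQ setting (the inductive splitting at a repeated vertex that you implicitly invoke does work, since the inner subwalk factor sits in a single slot of the product). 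The paper's edge-deletion induction sidesteps that bookkeeping entirely. For the description \eqref{alldesiredformation} of all solutions, the paper only sketches the nontrivial inclusion (``by induction as above''); your ratio argument $\hat c=\hat q_{1_v}\hat q_{0_v}^*$ propagated along arcs is more direct and self-contained.
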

		\begin{proof}   Suppose that the desired relative configurations $\left\{ \hat q_{d_{ij}} : (i, j) \in E \right\}$ is reasonable.   Then there is a desired formation $\vdq q_d\in\hat{\mathbb U}^{n}$, which satisfies (\ref{desrel}) for all $(i, j) \in E$.  Then this implies (\ref{cycle}).
			
			On the other hand, suppose that the desired relative configurations {$\left\{ \hat q_{d_{ij}} \right\}$} satisfy (\ref{cycle}) for all cycles in $G$.   {Let $m$ be the number of edges of the underlying undirected graph of $G$.} Since $G$ is   connected, we have $m \ge n - 1$.   Let $M = m - n +1$.  We now show by induction on $M$, that there is a desired formation $\vdq q_d\in\hat{\mathbb U}^{n}$, which satisfies (\ref{desrel}) for all $(i, j) \in E$. 		
			
			(i)  {We first show this for $M = 0, i.e., m = n-1$.}   Then $G$ is a tree.
			{Without loss of generality, we pick node $1$ as the root of this tree.}
			Let $\hat q_{d_1} = \hat 1$.  Then we may determine $\hat q_{d_j}$ for all $j \not = 1$ by \eqref{desrel} from each father node to its children nodes.
			Then  equation \eqref{desrel} is satisfied {for all $(i, j) \in E$}.

			(ii) We now assume that this claim is true for $M = M_0$, and prove that it is true for $M = M_0 +1$.   {Then $M > m_0 - n + 1$.  This implies that} there is at least one cycle  $\left\{ j_1, \dots, j_k \right\}$, with $j_{k+1} = j_1$, of $G$, such that the $k$ nodes $j_1, \dots, j_k$ are all distinct and $k \ge 3$. Without loss of generality, we may assume that $(j_k, j_1) \in E$.  Delete the  edge  $(j_k, j_1)$ from $E$.  Let $E_0 = E \setminus \{ (j_k, j_1) \}$, and $G_0 = (V, E_0)$.  Then $G_0$ is still directed and {connected}, and the desired relative configurations $\left\{ \hat q_{d_{ij}} : (i, j) \in E_0 \right\}$ satisfy  (\ref{cycle}) for all cycles in $G_0$.  By our induction assumption, there is a desired formation $\vdq q_d\in\hat{\mathbb U}^{n\times 1}$ such that (\ref{desrel}) holds
			for all $(i, j) \in E_0$.  Now,  by this and (\ref{cycle}) for this cycle $\left\{ j_1, \dots, j_k \right\}$,  (\ref{desrel}) also holds for $(j_1, j_k)$.  This proves this claim for $M = M_0+1$ in this case.

			Suppose that  ${\vdq q}_0$ is a desired formation satisfying (\ref{desrel}), and $\hat c \in \hat{\mathbb U}$.  Then $\hat c {\vdq q}_0$ also satisfies (\ref{desrel}).   We may prove the other side of the last claim by induction as above.
			
			{The proof is completed.}
		\end{proof}

		The set in} (\ref{alldesiredformation}) means  that while the $n$ rigid bodies converge asymptotically to the desired relative formation, they may still have rotations and translations as a cohesive whole as in \cite{LWHF14}, or they have a lead which moves following some trajectories and the other rigid bodies follow that lead as in \cite{WYL12}.
	
	
	{The equivalent condition in \eqref{cycle} requires enumerating all cycles, which may be expensive for a large scale graph.
		In the following theorem, we show another equivalent condition based on the zero eigenvalue of the Laplacian matrix and its corresponding eigenvectors.  {For the knowledge of eigenvalues of a dual quaternion matrix, see \cite{QL23}.}}

	\begin{Thm} \label{similar} Suppose that we have a UDQDG $\Phi = (G, {\udqset}, \varphi)$, where  $G = (V, E)$, {and} a {desired relative configuration}  {scheme} $\left\{ \hat q_{d_{ij}} \equiv \varphi(i, j): (i, j) \in E \right\}$ on $G$.   Assume that $G$ has a directed spanning tree.  Let the dual quaternion Laplacian {$\dqm L$} be constructed as above.  Then the {desired relative configuration} is reasonable, if and only if  $\dqm L$ has a unique zero eigenvalue, with an eigenvector  $\bar{\vdq q}_d=[{\dq q}^*_{d_1},\cdots,{\dq q}^*_{d_n}]^\top\in \hat{\mathbb U}^{n}$ satisfying 		
		\begin{equation} \label{null}
			\dqm L \bar{\vdq q}_d = \vdq 0,
		\end{equation}
		and
		\begin{equation} \label{LL1}
			\dqm Q {\dqm L}\dqm Q^*=L= D-A,
		\end{equation}
		where  {$\dqm Q=\mathrm{diag}(\vdq q_d)$},  $D$, $A$, and $L$ are the  {out-degree} matrix, the  {unweighted} adjacency matrix, and the unweighted Laplacian matrix of $G$, respectively.
	\end{Thm}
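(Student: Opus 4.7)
The plan is to split the equivalence into its two implications, leaning on Theorem \ref{Reasonable} for the ``only if'' direction and reading off the entries of (\ref{LL1}) for the converse. The critical underlying fact used throughout is that every UDQ $\hat q$ satisfies $\hat q \hat q^* = \hat q^* \hat q = 1$, so the diagonal matrix $\dqm Q = \mathrm{diag}(\vdq q_d)$ with UDQ entries obeys $\dqm Q^{-1} = \dqm Q^*$ entrywise, and (\ref{LL1}) is a bona fide similarity transformation rather than merely a congruence.

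For the ``only if'' direction, I would invoke Theorem \ref{Reasonable} to obtain a desired formation $\vdq q_d \in \udqset^n$ with $\hat q_{d_{ij}} = \hat q_{d_i}^* \hat q_{d_j}$ on each arc, and use this specific $\vdq q_d$ to build $\bar{\vdq q}_d$ and $\dqm Q$. For (\ref{null}), the $i$-th component of $\dqm L \bar{\vdq q}_d$ reads $d_i \hat q_{d_i}^* - \sum_{j:(i,j)\in E} \hat q_{d_{ij}} \hat q_{d_j}^*$, and each summand collapses via $\hat q_{d_{ij}} \hat q_{d_j}^* = \hat q_{d_i}^* \hat q_{d_j} \hat q_{d_j}^* = \hat q_{d_i}^*$, so the $d_i$ resulting copies cancel the diagonal term. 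For (\ref{LL1}), the diagonal entry of $\dqm Q \dqm L \dqm Q^*$ is $\hat q_{d_i} d_i \hat q_{d_i}^* = d_i$; the $(i,j)$-entry for $(i,j)\in E$ is $-\hat q_{d_i}\hat q_{d_i}^*\hat q_{d_j}\hat q_{d_j}^* = -1$; the remaining off-diagonal entries vanish, reproducing $D - A = L$. Since $\dqm L$ is therefore similar to the real matrix $L$, its spectrum coincides with that of $L$, and Lemma \ref{Lem:zeroeig_G} together with the spanning tree assumption on $G$ yields simplicity of the zero eigenvalue.

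For the converse, I would assume (\ref{LL1}) holds with $\dqm Q = \mathrm{diag}(\vdq q_d)$ coming from the prescribed eigenvector (whose entries, being conjugates of UDQs, are again UDQs). Reading the $(i,j)$-entry of (\ref{LL1}) for an arc $(i,j) \in E$ gives $\hat q_{d_i}(-\hat q_{d_{ij}})\hat q_{d_j}^* = -1$; multiplying on the left by $\hat q_{d_i}^*$ and on the right by $\hat q_{d_j}$ yields $\hat q_{d_{ij}} = \hat q_{d_i}^* \hat q_{d_j}$, which is the reasonableness condition (\ref{desrel}) with $\vdq q_d$ as a desired formation. The main obstacle here is not conceptual but bookkeeping: dual quaternion multiplication is non-commutative, so one must keep the order of $\hat q$ and $\hat q^*$ straight and apply the UDQ identity $\hat q \hat q^* = 1$ (versus $\hat q^* \hat q = 1$) at the appropriate step in each computation. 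Modulo this care, the proof reduces to a direct entrywise verification combined with Theorem \ref{Reasonable} and Lemma \ref{Lem:zeroeig_G}.
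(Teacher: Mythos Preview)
Your proposal is correct and follows essentially the same route as the paper: verify (\ref{null}) via $\hat q_{d_{ij}}\hat q_{d_j}^* = \hat q_{d_i}^*$, check (\ref{LL1}) entrywise using $\hat q\hat q^*=1$, deduce simplicity of the zero eigenvalue from the similarity to $L$ together with Lemma \ref{Lem:zeroeig_G}, and for the converse read off (\ref{desrel}) from the $(i,j)$-entry of (\ref{LL1}). One small remark: in the forward direction you do not actually need Theorem \ref{Reasonable}; the existence of a desired formation $\vdq q_d$ is the \emph{definition} of reasonableness, so you can start directly from (\ref{desrel}) as the paper does.
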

	\begin{proof}
		Suppose that ${\vdq q}_d$ is a desired formation.  Then it satisfies (\ref{desrel}) {and} the left {hand side} of the $i$th equation of (\ref{null}) satisfies
		$$d_i\hat q_{d_i}^* - \sum_{(i, j) \in E} \hat q_{d_{ij}}\hat q_{d_j}^*= d_i\hat q_{d_i}^* - \sum_{(i, j) \in E} \hat q_{d_i}^*\hat q_{d_j}\hat q_{d_j}^* = 0.$$
		Thus, (\ref{null}) {holds true}.
		{Equation (\ref{LL1}) follows directly from (\ref{desrel}).
			It follows from the fact that zero is a unique eigenvalue of $L$ and (\ref{LL1}) that   zero is also a unique eigenvalue of $\dqm L$.
		}

		On the other hand, suppose that $\vdq q_d \in {\hat{\mathbb U}^{n}}$  satisfies (\ref{null}) and  {\eqref{LL1}}.   We now need to show that it satisfies (\ref{desrel}).
		Then, by (\ref{LL1}), we have			 $ {\dqm L}=	\dqm Q ^* L\dqm Q$ and
		{\[ \hat q_{d_{ij}} = \hat q_{i}^*\hat q_j.\]}
		The sufficient condition holds  from {the definition}.
	\end{proof}
	
	This theorem establishes the relation between desired formations and the null space of the dual quaternion Laplacian,  and the relation between the dual quaternion Laplacian of a UDQDG $\Phi = (G, \udqset, \varphi)$, and the ordinary Laplacian of the directed graph $G$.

	\section{A Direct Method to Determine if a Desired Relative Configuration Scheme is Reasonable or Not}\label{sec:direct_method}
	
	Suppose that we have a  {connected directed} graph $G = (V, E)$, and a
	desired relative configuration scheme $\left\{ \hat q_{d_{ij}} : (i, j) \in E \right\}$.   A practical question is: Whether such a desired relative configuration scheme is  reasonable or not?
	Based upon the discussion in the last section, we have the following procedure to answer the above question.
	
	{Step 1.} If there are arcs $(i, j), (j, i) \in E$, check if (\ref{symmetry}) is satisfied or not.   If  (\ref{symmetry}) is not satisfied for such a pair of arcs, then the desired relative configuration scheme is  not reasonable.
	
	{Step 2.} Construct the dual quaternion Laplacian matrix $\hat L$ by (\ref{Laplaican_mat}).
	
	{Step 3.} Solve
	\begin{equation} \label{null1}
		\dqm L {\vdq x} = \vdq 0, \text{ and } \vdq x \in \hat{\mathbb U}^{n}.
	\end{equation}
	If  such a solution  exists, then by Theorem~\ref{similar},  we {denote $\vdq q_d \equiv \bar{\vdq x}\in\udqset^n$ and}  check whether \eqref{LL1} holds. If so,
	{$\vdq q_d$} is a desired formation satisfying (\ref{desrel}) and the desired relative configuration scheme is  reasonable.   Otherwise, it is not.

	We may write $\dqm L = L_s + L_d\epsilon$ and $\vdq x = \vx_s + \vx_d\epsilon$. Then (\ref{null1}) is equivalent to two quaternion equations
	\begin{equation} \label{L_s}
		L_s \vx_s  = {\0},
	\end{equation}
	where $\vx_s \in {\mathbb U}^{n}$,
	and
	\begin{equation} \label{L_d}
		L_s \vx_d  {=- L_d\vx_s},
	\end{equation}
	where {$x_{sj}x_{dj}^* + x_{dj}x_{sj}^* = 0$} for $j = 1, \dots, n$.

	However, solving \eqref{L_s} directly  may produce a zero solution which does not  satisfy $\vx_s \in {\mathbb U}^{n}$, even though  {the desired relative configuration scheme} is reasonable.
	The situation may be even more complicated when the dimension of the null space of $L_s$ is  greater than 1.
	In this case, we must exploit the  null space of $L_s$ to see whether $\vx_s \in {\mathbb U}^{n}$ exists.
	In the following, we make the following assumption.
	{For connected dual number or complex digraphs, we have $\mathrm{rank}(L_s)= n-1$. This assumption holds directly. However, for dual quaternion directed graphs, it is not clear yet.}
	
	\begin{Ass}\label{ass:rk>n-1}
		Given a  {connected directed} graph $G = (V, E)$, and a
		desired relative configuration scheme $\left\{ \hat q_{d_{ij}} : (i, j) \in E \right\}$. 	Assume the standard part of the Laplacian matrix satisfies  $\mathrm{rank}(L_s)\ge n-1$.
	\end{Ass}
	
	Under Assumption~\ref{ass:rk>n-1}, either \eqref{L_s} only has zero solution or all solutions of  \eqref{L_s}  may be denoted by
	\begin{equation}\label{sol_set}
		[\vx_s]=\{\vy: \vy= \vx_sq,\ q\in\mathbb Q\},
	\end{equation}
	where $\vx_s$ is a nonzero solution of  \eqref{L_s}.
	Similarly, when  Assumption~\ref{ass:rk>n-1} holds and $\vx_s$ satisfies \eqref{L_s}, either \eqref{L_d} has no solution at all or all solutions of  \eqref{L_d}    may be denoted by
	\begin{equation}\label{sol_set_d}
		[\vx_d]=\{\vy: \vy=\vx_d- \vx_sq,\ q\in\mathbb Q\},
	\end{equation}
	where $\vx_d$ is a  solution of  \eqref{L_d}.

	Different from the {real field, for quaternion rings,} the nonzero solution of \eqref{L_s} is not unique even if the dimension of  the null space of $L_s$ is  equal to 1. 	
	In fact, for any $\vx_s \in {\mathbb U}^{n}$  satisfying $L_s \vx_s  = {\0}$ and any $q\in \mathbb U$,   $\vx_s q\in {\mathbb U}^{n}$ is also a solution of \eqref{L_s}.
	Thus, we fix $x_{1s}=1$ and solve the reduced system
	\begin{equation} \label{L_s_reduced}
		L_{2s} \vx_{2s}  = -L_{1s},
	\end{equation}
	where $L_{1s}\in\mathbb Q^{n}$ and $L_{2s}\in\mathbb Q^{n\times (n-1)}$ are the first and  remaining columns of $L_s$, respectively, and $\vx_{2s}\in\mathbb Q^{n-1}$ is the last $n-1$ elements of $\vx_s$.
	If \eqref{L_s_reduced} does not admit any solutions, then \eqref{L_s} only has zero solution  and  {the configuration scheme} is not reasonable.
	{Otherwise, if} \eqref{L_s_reduced} has a nonzero solution $\vx_s$, we still have to check whether $\vx_s \in {\mathbb U}^{n}${: If} $\vx_s \in {\mathbb U}^{n}$, then we go to next step and compute $\vx_d${; Otherwise,} 	{by  the expression in \eqref{sol_set} and $x_{1s}=1$, we should choose $q\in\mathbb U$, which does not change the magnitude of  the entries. Thus, we conclude that $\vy \notin {\mathbb U}^{n}$ for any $\vy \in [\vx_s]$ and} the configuration  {scheme} is not reasonable.

	Similarly, the solution of \eqref{L_d} is not unique when \eqref{L_s} has nonzero solutions. Thus, we fix $x_{1d}=0$ and solve the reduced system
	\begin{equation} \label{L_d_reduced}
		L_{2s} \vx_{2d}  =- L_d\vx_s,
	\end{equation}
	where $\vx_{2d}\in\mathbb Q^{n-1}$ is the last $n-1$ entries of $\vx_d$.
	{In fact, for any solution $\vy_d$ of \eqref{L_d}, we can set $\vx_d=\vy_d-\vx_sy_{1d}$ and derive $x_{1d}=0$.}
	If \eqref{L_d_reduced} is  solvable and   {$x_{sj}x_{dj}^* + x_{dj}x_{sj}^* = 0$} for $j = 1, \dots, n$, then the configuration  {scheme} is reasonable. Otherwise,   the configuration {scheme} is not reasonable  since for any solution $\vy\in [\vx_d]$, there is
	\[x_{sj}y_j^* + y_jx_{sj}^* = x_{sj}x_{dj}^* + x_{dj}x_{sj}^* - x_{sj}(q+q^*)x_{sj}^* = x_{sj}x_{dj}^* + x_{dj}x_{sj}^*- 2q_0,\]
	where $q_0$ is the standard part of $q$.
	This {combining} with $x_{1d}=0$ derives $q_0=0$.
	Thus we conclude that for any solution $\vy\in [\vx_d]$, there is  $x_{sj}y_j^* + y_jx_{sj}^* = x_{sj}x_{dj}^* + x_{dj}x_{sj}^*$ for $i=1,\dots,n$ and  the configuration  {scheme} is not reasonable.

	We summarize the whole procedure in
	{Algorithm~\ref{alg:is_resonable}.}
	\begin{algorithm}[t]
		\caption{A  direct  method to determine if a desired relative configuration scheme is reasonable or not}\label{alg:is_resonable}
		\begin{algorithmic}[1]
			\Require  a directed and connected graph $G = (V, E)$, and a
			desired relative configuration scheme $\left\{ \hat q_{d_{ij}} : (i, j) \in E \right\}$.
			\State  If there are arcs $(i, j), (j, i) \in E$, check if (\ref{symmetry}) is satisfied or not.   If  (\ref{symmetry}) is not satisfied for such a pair of arcs, then \textsc{Flag}=0 and stop.
			\State Construct the Laplacian matrix $L\in\mathbb Q^{n\times n}$ and solve \eqref{L_s_reduced}.
			\If{\eqref{L_s_reduced} is solvable and $\vx_s\in\mathbb U^n$}
			\State Solve   \eqref{L_d_reduced}.
			\If{\eqref{L_d_reduced} is solvable and {$\vdq q_d \equiv \bar{\vdq x}\in\udqset^n$}  satisfies \eqref{LL1}}
			\State  \textsc{Flag}=1 and stop.
			\Else
			\State  \textsc{Flag}=0 and stop.
			\EndIf
			\Else
			\State  \textsc{Flag}=0 and stop.
			\EndIf
			\State \textbf{Output:} \textsc{Flag}.
		\end{algorithmic}
	\end{algorithm}

	Let us revisit the examples in {Example~\ref{ex:2.1}.} For the example of directed tree {$G^{(1)}=(V^{(1)},E^{(1)})$}, we have
	\begin{equation*}
		L_{1s}^{(1)} =\begin{bmatrix} 0\\   -q_{d_{21s}} \\ 	  -q_{d_{31s}}\end{bmatrix}, \quad  	  L_{2s}^{(1)} = \begin{bmatrix}
			0 & 0 \\
			1 & 0\\
			0 & 1
		\end{bmatrix}, \text{ and } L_d^{(1)} =  	  \begin{bmatrix}
			0 & 0 &0 \\
			-q_{d_{21d}} & 0 & 0\\
			-q_{d_{31d}} &0 & 0
		\end{bmatrix}.
	\end{equation*}
	Thus, by solving \eqref{L_s_reduced} and \eqref{L_d_reduced}, we have $\vx_s^{(1)} = [1,  q_{d_{21s}},  q_{d_{31s}}]^\top$ and  $\vx_d^{(1)} = [0,  q_{d_{21d}},  q_{d_{31d}}]^\top$, respectively. Namely, $\vx^{(1)} = [1, \hat q_{d_{21}}, \hat q_{d_{31}}]^\top\in\hat{\mathbb U}^3$ is a solution of \eqref{null1} and the desired relative configuration scheme is reasonable for any $\hat q_{d_{21}}, \hat q_{d_{31}} \in\hat{\mathbb U}$.

	For the example of three points directed cycle {$G^{(2)}=(V^{(2)},E^{(2)})$}, we have
	\begin{equation*}
		L_{1s}^{(2)} =\begin{bmatrix} 1\\   -q_{d_{21s}} \\ 	 0\end{bmatrix}, \quad  	  L_{2s}^{(2)} = \begin{bmatrix}
			0 & -q_{d_{13s}} \\
			1 & 0\\
			-q_{d_{32s}} & 1
		\end{bmatrix}, \text{ and } L_d^{(2)} =  	  \begin{bmatrix}
			0 & 0 &-q_{d_{13d}}  \\
			-q_{d_{21d}} & 0 & 0\\
			0 &-q_{d_{32d}}  & 0
		\end{bmatrix}.
	\end{equation*}
	Thus,  system \eqref{L_s_reduced} is consistent  only if $q_{d_{13s}}^*= q_{d_{32s}}q_{d_{21s}}$. In this case, we have $\vx_s^{(2)} = [1,  q_{d_{21s}},  q_{d_{13s}}^*]^\top$.   System \eqref{L_d_reduced} is  consistent  only if $q_{d_{13d}}^*= q_{d_{32s}}q_{d_{21d}}+q_{d_{32d}}q_{d_{21s}}$. In this case, we have $\vx_d^{(2)} = [0,  q_{d_{21d}},  q_{d_{13d}}^*]^\top$.   Namely,   the desired relative configuration scheme is reasonable only if $\hat q_{d_{13}}^*=\hat q_{d_{32}}\hat q_{d_{21}}$, which has a solution $\vx^{(2)} = [1, \hat q_{d_{21}}, \hat q_{d_{13}}^*]^\top\in\hat{\mathbb U}^3$ for \eqref{null1}.
	{We could verify that $\vdq q_d \equiv \vdq x^{(2)}$  satisfies \eqref{LL1}.}

	In practice, we may convert \eqref{L_s_reduced} {and \eqref{L_d_reduced}}  to real or complex equations,  whose dimension is expanded to four times or twice of
	the original dimension.   Then they are surely solvable.
	For instance,  any quaternion linear equation $A\vx=\vb$ with $A\in\mathbb Q^{m\times n}$,  $\vb\in\mathbb Q^{m}$ can be equivalently rewritten to a real matrix equation
	\begin{equation} \label{R_L_s}
		\R(A) \R_v(\vx)  = \R_v(\vb),
	\end{equation}
	where $\R(\cdot)$ is a linear homeomorphic mapping from quaternion matrices (or vectors) to their real counterpart and $\R_v(\cdot)$ denotes the first column of $\R(\cdot)$.
	Specifically, let $A=[A_0, A_1, A_2, A_3]$, then
	\[
	\R(A) = \begin{bmatrix}
		A_0 & -A_1 & -A_2 & -A_3 \\
		A_1 & A_0 & -A_3 & A_2 \\
		A_2 & A_3 & A_0 & -A_1 \\
		A_3 & -A_2 & A_1 & A_0	
	\end{bmatrix} \text{ and } 	\R_v(A) = \begin{bmatrix}
		A_0\\
		A_1 \\
		A_2\\
		A_3	
	\end{bmatrix}\]
	Several structure-preserving   algorithms such as \cite{DLLWZ24, JN21, LWZ23} 
	can be used here to solve \eqref{R_L_s}  efficiently.
	
	\section{A Gain Graph Method to Determine if a Unit Weighted Directed Graph is Balanced or Not}\label{sec:gain_graph_method_balance}
	
	The adjacency and Laplacian matrices of a UDQDG are non-Hermitian.   Their spectral structures are complicated and not well-explored yet.  Here, we propose a gain graph method to convert the balance problem of a UDQDG to the balance problem of a dual quaternion unit gain graph whose adjacency and Laplacian matrices are Hermitian.
	
	\medskip
	
	This method also works for general unit weighted directed graphs.    A general unit weighted directed graph has the form $\Phi = (G, \Omega, \varphi)$, where $\Omega$ is the weight group, which is a mathematical group, all of whose elements have unit $1$.  When $\Omega = \udqset$, we have a UDQDG.   When $\Omega = \{-1, 1 \}$, $\Phi$ is a real unit directed graph.   Real unit directed graphs and their applications have been studied by  Kramer and Palowitch \cite{KP87}, Yang, Shah and Xiao \cite{YSX10}, with the name signed directed graphs, and Li, Yuan, Wu and Lu \cite{LYWL18} with the name directed signed graphs.   Applications of real unit directed graphs include process systems analysis, complex network analysis and data mining.   Wissing and van Dam  \cite{WvD22}, Ko and Kim \cite{KK23} studied some special complex unit weighted directed graphs.
	
	Suppose that we have a unit weighted directed graph  $\Phi = (G, \Omega, \varphi)$, where $G=(V, E)$ and $\varphi(i, j) = \hat q_{d_{ij}}$ for $(i, j) \in E$.   We may first check whether there are arcs $(i, j), (j, i) \in E$, and if there are such arc $(i, j) \in E$,  whether (\ref{symmetry}) is satisfied or not.   If  (\ref{symmetry}) is not satisfied for such a pair of arcs, then  $\Phi$ is not reasonable (balanced).
	
	\medskip
	
	If $\Phi$ passes the above check, then we may construct $G_1=(V, E_1)$ such that for any $(i, j) \in E$, we have $(i, j) \in E_1$ and $(j, i) \in E_1$, and construct $\Phi_1 = (G_1, \Omega, \varphi_1)$ such that for any $(i, j) \in E$, $\varphi_1(i, j) = \varphi(i, j)$ and $\varphi_1(j, i) = \left(\varphi(i, j)\right)^{-1}$.  Then $\Phi_1$ is a unit gain graph, and $\Phi$ is reasonable (balanced) if and only if $\Phi_1$ is reasonable (balanced).  We may use the method provided in \cite{QC24} to test whether $\Phi_1$ is reasonable (balanced) or not.   We call this method the gain graph method.
	
	For a unit weighted directed graph $\Phi = (G, \Omega, \varphi)$, where $G = (V, E)$, we say that a function $\theta : V \to \Omega$ is a potential function of $\Phi$ if for every $(i, j) \in E$, we have
	$$\varphi(i, j) = \theta(i)^{-1}\theta(j).$$
	Then we may easily prove the following proposition.

	\begin{Prop}
		Suppose the unit weighted directed graph $\Phi$ passes the above check, and a unit gain graph $\Phi_1$ is constructed.    Then a function $\theta : V \to \Omega$ is a potential function of $\Phi$ if and only if it is a potential function of $\Phi_1$.
	\end{Prop}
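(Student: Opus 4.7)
The plan is to prove the two directions by essentially unwrapping the definition of $\Phi_1$ and using the symmetry check (\ref{symmetry}) to ensure that the construction of $\varphi_1$ is well-defined and compatible with $\varphi$.

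First I would handle the easy direction. Since $E \subseteq E_1$ by construction, and $\varphi_1$ agrees with $\varphi$ on $E$, any potential function $\theta$ for $\Phi_1$ satisfies $\varphi(i,j) = \varphi_1(i,j) = \theta(i)^{-1}\theta(j)$ for every $(i,j) \in E$, so $\theta$ is automatically a potential function for $\Phi$.

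For the converse direction, assume $\theta : V \to \Omega$ is a potential function of $\Phi$, so $\varphi(i,j) = \theta(i)^{-1}\theta(j)$ for every $(i,j) \in E$. I need to check the potential equation on every arc of $E_1$. Pick $(i,j) \in E_1$. By the construction of $E_1$, either $(i,j) \in E$ or $(j,i) \in E$ (or both). In the first case, $\varphi_1(i,j) = \varphi(i,j) = \theta(i)^{-1}\theta(j)$ directly from the hypothesis. In the second case, if $(i,j) \notin E$ but $(j,i) \in E$, then by the definition of $\varphi_1$ we have $\varphi_1(i,j) = \varphi(j,i)^{-1} = \bigl(\theta(j)^{-1}\theta(i)\bigr)^{-1} = \theta(i)^{-1}\theta(j)$, which is exactly what is needed.

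The one potential obstacle, and the reason the symmetry check is invoked, is the case when both $(i,j)$ and $(j,i)$ lie in $E$: the construction would then assign $\varphi_1(i,j) = \varphi(i,j)$ via the arc $(i,j)$ and $\varphi_1(i,j) = \varphi(j,i)^{-1}$ via the arc $(j,i)$, so $\varphi_1$ is only well-defined because (\ref{symmetry}) guarantees $\varphi(i,j) = \varphi(j,i)^{-1}$. Once this consistency is in hand, both readings of $\varphi_1(i,j)$ equal $\theta(i)^{-1}\theta(j)$ by the argument above, and the equivalence is complete. I would emphasize that the proposition is essentially a formal verification, but that the symmetry check is doing real work in making the construction coherent.
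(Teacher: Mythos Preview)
Your argument is correct and is exactly the direct verification the paper has in mind; in fact the paper does not write out a proof at all, stating only that ``we may easily prove the following proposition,'' and your unwrapping of the definitions together with the use of (\ref{symmetry}) to guarantee well-definedness of $\varphi_1$ is precisely the intended reasoning.
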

	
	Suppose that we have a directed and connected graph $G = (V, E)$, and a
	desired relative configuration scheme $\left\{ \hat q_{d_{ij}} : (i, j) \in E \right\}$.   Again, we wish to answer the following question: Whether such a desired relative configuration scheme is  reasonable or not?    The gain graph method can be described as follows.
	
	{Step 1.} If there are arcs $(i, j), (j, i) \in E$, check if (\ref{symmetry}) is satisfied or not.   If  (\ref{symmetry}) is not satisfied for such a pair of arcs, then the desired relative configuration scheme is  not reasonable.
	
	{Step 2.} {Construct} $G_1=(V, E_1)$ such that for any $(i, j) \in E$, we have $(i, j) \in E_1$ and $(j, i) \in E_1$, and construct $\Phi_1 = (G_1, \Omega, \varphi_1)$ such that for any $(i, j) \in E$, $\varphi_1(i, j) = \varphi(i, j)$ and $\varphi_1(j, i) = \left(\varphi(i, j)\right)^{-1}$.   Construct the dual quaternion Laplacian matrix $\hat L_1$ correspondingly.
	
	{Step 3.} Solve
	\begin{equation} \label{null2}
		\dqm L_1 {\vdq x} = \vdq 0, \text{ and } \vdq x \in \hat{\mathbb U}^{n},
	\end{equation}
	{or equivalently, compute the eigenvector of $\dqm L_1$ corresponding to the zero eigenvalue, denoted by $\vdq x$.}
	If  such a solution  exists, then we  check whether we have
	$$\dqm Q {\dqm L_1}\dqm Q^*=L_1= D-A_1,$$
	where  {$\dqm Q=\mathrm{diag}(\bar{\vdq x})$},  $D$, $A_1$, and $L_1$ are the  {out-degree} matrix, the  {unweighted} adjacency matrix, and the unweighted Laplacian matrix of $G_1$, respectively.
	If so,
	then the unit gain graph $\Phi_1$ and the unit weighted directed graph $\Phi$ are balanced.   Otherwise, they are not.
	
	This method can be regarded as an alternative approach for testing the unit weighted directed graph $\Phi$ is balanced or not.   The difference of these two methods is that the coefficient matrix in (\ref{null2}) is Hermitian, while the coefficient matrix in (\ref{null1}) is non-Hermitian.    It depends if there are algorithms which can use the advantage that the the coefficient matrix is Hermitian, to solve these equations.
	{In addition, for the grain graph method, we may compute the the eigenvector of $\dqm L_1$ corresponding to the zero eigenvalue.}

	For {the balance problem of} a non-unit weighted directed graph, there {may be no} advantage to convert it to a non-unit gain graph, as the coefficient matrix in (\ref{null2}) is still non-Hermitian in this case.   Thus, in this case, maybe the direct method is {better}.   Also, the out-degree needs to be carefully defined.   Thus, we study the {balance problem} of general non-unit weighted directed graphs in {the next} section.
	
	{\section{General Non-Unit Weighted Directed Graphs}}
	
	
	Suppose that we have a  {loopless} directed graph $G = (V, E)$, a multiplication group $\Omega \subset \dqset$ and a {\bf weight function} $\varphi : E \to \Omega$.  Here, $V$ is the vertex set and $E$ is the arc set.
	
	Let $i, j \in V$.  If $(i, j) \not \in E$, then we have ${\varphi}(i, j) = 0$.  If $(i, j) \in E$, then we have ${\varphi}(i, j) = \hat q_{ij} \in \Omega$.  We call ${\varphi}(i, j)$ the {\bf weight} of the
	arc $(i, j)$,  {and} $\Omega$ the {\bf weight group}  {of $\Phi$}.   Here, $\Omega$ contains non-unit elements.

	The  {triple} $\Phi = (G, \Omega, \varphi)$ is  {called} a (non-unit) {{\bf weighted directed graph (WDG)}}.
	
	{\bf Example 1} If $\Omega = {\mathbb R} \setminus \{ 0 \}$, then we have
	{\bf real weighted directed graphs}.  Such a weighted directed graph has a long history.   For example, in 1991, H. Schneider and M.H. Schneider \cite{SS91} studied some max-min properties of such real weighted
	directed graphs.   Actually, if we regard such real weight as a flow value and add a source node and a sink node to the real weighted directed graph to make the in-flow and out-flow at each other node to be balanced, then we have
	a network flow problem, and the famous max-flow-min-cut theorem is valid here.  In 2017, Ahmadizadeh, Shames, Martin and Ne\v{s}ic \cite{ASMN17} studied eigenvalues of the Laplacian matrix of such a real weighted directed graph with the name directed signed graphs.    Also in 2017, Jiang, Zhang and Chen \cite{JZC17}
	studied such a real weighted directed graph with the name signed directed graphs.
	
	{\bf Example 2} If $\Omega = {\mathbb R}_{++}$, the set of positive numbers, then we have {\bf nonnegative weighted directed graphs}.   Nonnegative weighted directed graphs have found broad applications in control engineering \cite{OFM07}.
	
	{\bf Example 3} If $\Omega$ is the set of nonzero complex numbers, then we have {\bf complex WDG}s.   Notably, complex WDGs are shown to be very useful in multi-agent formation control.   Dong and Lin \cite{DL16} studied such non-unit weighted directed graphs. Notably, complex WDGs are shown to be very useful in multi-agent formation control {\cite{LWCFH15, LH15}}.
	Besides applications in formation control, networks with complex weights also arise in fields as diverse as quantum information, 	quantum chemistry, electrodynamics, rheology, and machine learning \cite{BP24}.
	
	{\bf Example 4} If $\Omega$ is the set of appreciable dual quaternion numbers, then we have {\bf dual quaternion weighted directed graphs}.    A dual quaternion number has the form $\hat q = q_s + q_d \epsilon$, where $q_s$ and $q_d$ are two quaternion numbers, $q_s$ is called the standard part of $\hat q$, $q_d$ is called the dual part of $\hat q$. If $q_s \not = 0$, then $\hat q$ is called appreciable.   A dual quaternion number is invertible if and only if it is  appreciable.  Thus,   the set of appreciable dual quaternion numbers forms a mathematical group.   The other examples of non-unit weighted directed graphs, listed earlier, are special cases of dual quaternion weighted directed graphs.   Thus, we may study dual quaternion weighted directed graphs as the prototype of general non-unit weighted directed graphs.
	
	
	A {\bf walk} $W$ in a  {WDG} $\Phi$ is defined in this way. It consists of vertices $i_1, i_2, \dots, i_k$.   For $j = 1, \dots, k-1$, either $(i_j, i_{j+1}) \in W$ or $(i_{j+1}, i_j) \in W$.  If
	$(i_j, i_{j+1}) \in W$, we say that the arc $(i_j, i_{j+1})$ is a {\bf forward arc} of $W$ and let the shadow element $\hat p_{i_ji_{j+1}} = \hat q_{i_ji_{j+1}}$.  If
	$(i_{j+1}, i_j) \in W$, we say that the arc $(i_{j+1}, i_j)$ is a {\bf backward arc} of $W$ and let the shadow element $\hat p_{i_ji_{j+1}} = \hat q_{i_{j+1}i_j}^{-1}$. The {\bf weight} of the walk $W$ is
	\begin{equation}
		\varphi(W) = \prod_{j=1}^{k-1} \hat p_{i_ji_{j+1}}.
	\end{equation}
	The walk $W$ is {\bf neutral} if $\varphi(W)$ {is a real positive number}.
	The WDG $\Phi$  is called {\bf balanced} if every cycle $C \subseteq  {E}$ is neutral.

	The definition of  {balanced} here is different from that of \cite{DL16} since only forward arcs are considered  in \cite{DL16}.
	We exploit the difference by the following example, which is a slight modification of Example 4  {of} \cite{DL16}.
	\medskip
	
	\begin{example}\label{ex:5.1}
		Consider two complex WDGs  in Fig.~\ref{fig:balance}.
		The complex  {WDG} on  the  left is the same with {Example 4} of  \cite{DL16}. It is unbalanced  {under our definition} since $\phi(C)=-1$ for the cycle $C=\{1,2,3,1\}$.  	The complex  {WDG} on  the  right is balanced.
	\end{example}
	We would like to point out that 	the complex  {WDG} on  the  left of Fig.~\ref{fig:balance} is balanced  {under} the definition of  {balanced} {in} \cite{DL16}.
	
	\begin{figure}\label{fig:balance}
		\begin{center}
			\includegraphics[width=0.3\linewidth]{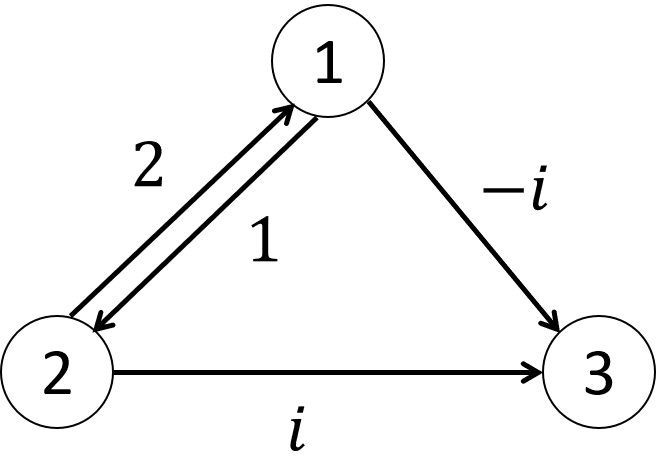} \qquad
			\includegraphics[width=0.3\linewidth]{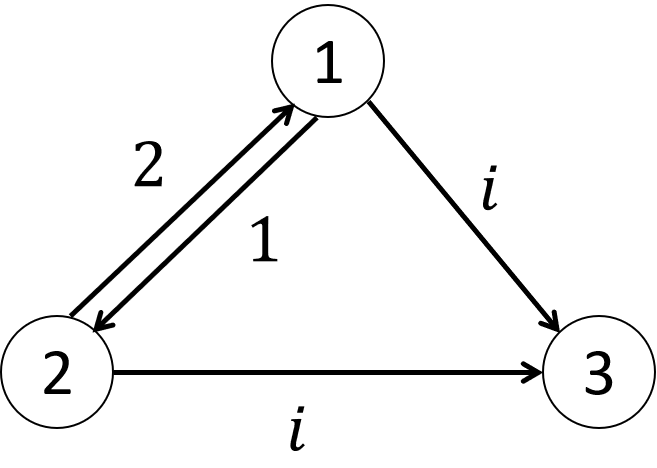}
		\end{center}
		\caption{Two  complex WDGs. Left: A unbalanced graph. Right: A balanced graph.}
	\end{figure}

	A {\bf potential function} is a function $\theta : V \to \Omega$ such that for every arc $(i, j) \in E$,
	\begin{equation} \label{potential}
		\varphi(i, j) = \theta(i)^{-1}\theta(j){c_{ij},}
	\end{equation}
	{for a certain positive real number $c_{ij}$.}
	
	A {\bf switching function} is a function $\zeta : V \to \Omega$ that switches the directed gain graph $\Phi = (G, \Omega, \varphi)$ to $\Phi^\zeta = (G, \Omega, \varphi^\zeta)$, where
	\begin{equation}  \label{switching}
		\varphi^\zeta(i, j) = (\zeta(i))^{-1}\varphi(i, j)\zeta(j).
	\end{equation}
	Thus, in this case, $\Phi$ and $\Phi^\zeta$ are switching equivalent, denoted by $\Phi \sim \Phi^\zeta$.
	Denote $\bf 1$ as a {weight function} such that for any $(i, j) \in E$, we have ${\bf 1}(i, j) \equiv 1$.   Then we have the following theorem.
	

	\begin{Thm}
		Let $\Phi = (G, \Omega, \varphi)$ be a {WDG}, where $G = (V, E)$ is a connected directed graph.   Then the following statements are equivalent.
		
		(i) $\Phi$ is balanced.
		
		(ii) $\Phi \sim (G, {\Omega, \varphi^\zeta})$, where ${\varphi^\zeta}(i,j) = |\varphi_s(i,j)|$ for all $(i,j)\in E$.

		(iii)  {$\Phi$} has a potential function {$\theta$}.
	\end{Thm}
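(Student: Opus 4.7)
The plan is to establish the chain of implications (iii) $\Rightarrow$ (i) $\Rightarrow$ (ii) $\Rightarrow$ (iii). The overall structure mirrors the balance theorem for unit gain graphs (Theorem~\ref{lem:balance}), but systematically exploits the fact that the positive reals lie in the centre of $\dqset$, so they commute freely past any other dual quaternion factor. I expect this centrality to do almost all of the real algebraic work.

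For (iii) $\Rightarrow$ (i), I would fix a cycle $C = \{i_1,\dots,i_k,i_{k+1}\equiv i_1\}$ and inspect each shadow element $\hat p_{i_j i_{j+1}}$. Whether the arc is forward, giving $\hat p = \theta(i_j)^{-1}\theta(i_{j+1})c_{i_j i_{j+1}}$, or backward, giving $\hat p = c_{i_{j+1}i_j}^{-1}\theta(i_j)^{-1}\theta(i_{j+1})$, it has the same shape: a positive real times $\theta(i_j)^{-1}\theta(i_{j+1})$. Using centrality I collect all the positive real factors to one side and telescope the remaining product to $\theta(i_1)^{-1}\theta(i_1)=1$, so $\varphi(C)$ is a positive real. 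For (ii) $\Rightarrow$ (iii), setting $\theta(i) := \zeta(i)^{-1}$ and rearranging the switching identity $\zeta(i)^{-1}\varphi(i,j)\zeta(j) = |\varphi_s(i,j)|$ via centrality gives $\varphi(i,j) = \zeta(i)\zeta(j)^{-1}|\varphi_s(i,j)| = \theta(i)^{-1}\theta(j)c_{ij}$ with $c_{ij} := |\varphi_s(i,j)|$, exhibiting $\theta$ as a potential function.

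The main work is (i) $\Rightarrow$ (ii). I would introduce the normalised weights $\psi(i,j) := |\varphi_s(i,j)|^{-1}\varphi(i,j)$ on each $(i,j)\in E$. A direct computation shows that each shadow element for $\psi$ differs from the corresponding one for $\varphi$ only by a positive real factor, so $\psi(C) = r\,\varphi(C)$ for some $r>0$ and every cycle $C$; by balance $\psi(C)$ is then a positive real. Crucially, $\psi_s(i,j) = \varphi_s(i,j)/|\varphi_s(i,j)|$ is a unit quaternion, so $\psi_s(C)$ has magnitude $1$, which forces $\psi(C)=1$. I would then fix a spanning tree of the underlying undirected graph with root $r$, set $\zeta(r)=1$, and define $\zeta(v) := \psi(P_{rv})^{-1}$ recursively along the tree, where $P_{rv}$ is the tree walk from $r$ to $v$. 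For any non-tree arc $(u,v)\in E$, the required identity $\zeta(u)\zeta(v)^{-1} = \psi(u,v)$ reduces, after cancelling the common $r$-to-branch-point initial segment of $P_{ru}$ and $P_{rv}$, to $\psi(C_{uv})=1$ for the simple fundamental cycle $C_{uv}$ at the branching vertex, which is already in hand. Inverting the normalisation and invoking centrality once more gives $\varphi^\zeta(i,j) = |\varphi_s(i,j)|$ on every arc.

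The main obstacle is the step inside (i) $\Rightarrow$ (ii) that upgrades ``$\psi(C)$ is a positive real'' to the strict identity ``$\psi(C)=1$''; without it the tree construction of $\zeta$ need not be consistent on non-tree arcs. This upgrade uses in an essential way that the chosen normalisation is $|\varphi_s|$, since only this choice sends $\psi_s$ into the unit quaternions, producing the magnitude-$1$ constraint that combines with the positive-real constraint to pin down the value $1$. A minor bookkeeping point is that the closed walk $P_{ru}\cup(u,v)\cup P_{rv}^{-1}$ may fail to be a simple cycle, but conjugation by the common initial segment reduces the check to a genuine simple cycle, and centrality of positive reals ensures that this conjugation preserves the value $1$.
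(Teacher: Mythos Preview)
Your proposal is correct and essentially complete; the route, however, differs from the paper's in two visible ways. First, the paper cycles the implications as (iii) $\Rightarrow$ (ii) $\Rightarrow$ (i) $\Rightarrow$ (iii), whereas you do (iii) $\Rightarrow$ (i) $\Rightarrow$ (ii) $\Rightarrow$ (iii). Second, and more substantively, for the hard direction starting from balance the paper proves (i) $\Rightarrow$ (iii) by induction on $M=m-n+1$: it builds $\theta$ on a spanning tree with all $c_{ij}=1$, then adds back one deleted arc at a time, defining the missing $c_{j_kj_1}$ by the cycle relation and using neutrality to see it is a positive real. You instead prove (i) $\Rightarrow$ (ii) in one shot by passing to the normalised weight $\psi(i,j)=|\varphi_s(i,j)|^{-1}\varphi(i,j)$, observing that $\psi_s$ lands in the unit quaternions so that the a~priori positive real $\psi(C)$ is forced to equal $1$, and then constructing $\zeta$ along a spanning tree with a single fundamental-cycle check for each non-tree arc. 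Your normalisation trick is a clean way to convert ``positive real'' into ``exactly $1$'' and thereby avoid the induction; the paper's induction is slightly more elementary but leaves the verification that the newly defined $c_{j_kj_1}$ is indeed a positive real implicit. Both arguments rest on the same two pillars---centrality of positive reals in $\dqset$ and a spanning-tree propagation---so the difference is organisational rather than conceptual. One minor caveat you share with the paper: the switching function $\zeta$ you build takes values in appreciable dual quaternions but not obviously in the abstract weight group $\Omega$ unless $\Omega$ contains the positive reals; the paper's construction $\zeta(i)=|\theta_s(i)|\theta(i)^{-1}$ has the same issue, so this is a wrinkle in the statement rather than in your argument.
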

	
	\begin{proof}
		$(iii)\Rightarrow (ii)$
		Suppose that there is a  potential function $\theta : V \to \Omega$ and positive real numbers $c_{ij}$ such that $\varphi(i, j) = \theta(i)^{-1}\theta(j)c_{ij}$  for every arc $(i, j) \in E$.
		Since the dual part of $c_{ij}$ is zero, we have
		\begin{equation}\label{cij_DQ}
			c_{ij} = \frac{ |\varphi_s(i,j)| |\theta_s(i)|}{|\theta_s(j)|},
		\end{equation}
		where $ \varphi_s(i,j)$ and $\theta_s(i)$ are  the  standard parts of $\varphi(i, j)$ and $\theta(i)$, respectively.
		For $i=1,\dots,n$, let
		\[\zeta(i) ={|\theta_s(i)|} \theta(i)^{-1}.\]
		Then we have
		\begin{equation}\label{poten_DQ}
			\varphi^\zeta(i, j) = {\frac{|\theta_s(j)|}{|\theta_s(i)|}} \theta(i)\varphi(i, j)\theta(j)^{-1}={\frac{|\theta_s(j)|}{|\theta_s(i)|}} c_{ij} =|\varphi_s(i,j)|.
		\end{equation}

		{$(ii)\Rightarrow (i)$  It follows from  \eqref{poten_DQ}  that $\varphi(i,j)=|\varphi_s(i,j)|\zeta(i) \zeta(j)^{-1}$.
			Given any cycle $C=\{i_1,\dots,i_k,i_1\}$,   the weight of  $C$ is equal to
			\[\varphi(C) = \hat p_{i_1i_2}\dots \hat p_{i_ki_1} = |\left(p_{i_1i_2}\right)_s|\dots  |\left(p_{i_ki_1}\right)_s|,\]
			where $(p_{ij})_s$ is the standard part of the shadow element $\hat p_{ij}$.
			This  implies that all cycles are {neutral} and $\Phi$ is balanced.}

		$(i)\Rightarrow (iii)$   Suppose that all cycles are {neutral}.   Let $m$ be the number of edges of the underlying undirected graph of $G$. Since $G$ is   connected, we have $m \ge n - 1$.
		Let $M = m - n +1$.  We now show by induction on $M$  that there is a   potential function $\theta : V \to \Omega$ and positive real numbers $c_{ij}$ such that $\varphi(i, j) = \theta(i)^{-1}\theta(j)c_{ij}$  for every arc $(i, j) \in E$.
		
		We first show this for $M = 0, i.e., m = n-1$.   Then {the underlying graph of} $G$ is a tree.
		Without loss of generality, we pick node $1$ as the root of {the underlying tree.}
		Let $\theta(1) =1$.  Then for all $j \not = 1$,  we may  {let $c_{ij}=1$   and  $\theta(j)=\theta(i)\varphi(i,j)$  if $(i,j)\in E$ and $\theta(j)=\theta(i)\varphi(j,i)^{-1}$  if $(j,i)\in E$  from each father node $i$ to its children node $j$.
			
			We now assume that this claim is true for $M = M_0$, and prove that it is true for $M = M_0 +1$.   This implies that there is at least one cycle  $\left\{ j_1, \dots, j_k \right\}$, with $j_{k+1} = j_1$, of $G$, such that the $k$ nodes $j_1, \dots, j_k$ are all distinct and $k \ge 3$. We may assume that $(j_k, j_1) \in E$.  Delete the  edge  $(j_k, j_1)$ from $E$.  Let $E_0 = E \setminus \{ (j_k, j_1) \}$, and $G_0 = (V, E_0)$.  Then $G_0$ is still directed and {connected}, and the  weight of cycles in  $G_0$ are positive.  By our induction assumption, there is a potential function $\theta$  and positive real numbers $c_{ij}$  such that \eqref{potential} holds
			for all $(i, j) \in E_0$.
			Let $c_{j_kj_1}=\theta(j_1)^{-1}\theta(j_k)\varphi(j_k,j_1)$.
			Then   \eqref{potential} also holds for $(j_k, j_1)$.  This proves the claim for $M = M_0+1$ in this case.
		}

		This completes the proof.
	\end{proof}
	
	{
		Denote $\Phi=(G,\Omega,\varphi)$ as	the complex  WDG  on  the  left of Fig.~\ref{fig:balance}. Then  We may check that $\theta(1)=1, \theta(2)=1, \theta(3)=\ii$ is a potential function for $\Phi$  with $c_{21}=2$, $c_{12}=c_{13}=c_{23}=1$.
		Furthermore, let $\zeta(1)=\zeta(2)=1$ and $\zeta(3)=-\ii$. Then
		$\varphi^{\zeta}(2,1)=2$ and  $\varphi^{\zeta}(1,2)=\varphi^{\zeta}(1,3)=\varphi^{\zeta}(2,3)=1$.}
	
	{We may also extend Theorem~\ref{similar} to general WDGs.}
	
	Let ${\hat A}=(\hat a_{ij})$, where $\hat a_{ij} = {\varphi(i, j)}$ of {$(i, j) \in V\times V$}.
	The matrix ${\hat A} = ({\hat a_{ij}})$ is called the {\bf adjacency matrix} of $\Phi$.
	
	The out-degree ${d_i}$ of a vertex $i$ is defined as
	\begin{equation} \label{outdegree}
		{d_i} = \sum_{j\in E_i}  \left|{\varphi_s({i,j})}\right|,
	\end{equation}
	{where $E_i=\left\{j : (i, j) \in E \right\}$ and $\varphi_s({i,j})$ is the standard part of $\varphi({i,j})$.  Thus, the out-degree $d_i$ is a nonnegative real number.}
	Note that by (\ref{e1}),
	this definition is consistent with the definition for UDQDGs in Section 3.
	
	The {\bf Laplacian matrix} of  a  {WDG} $\Phi$ is  defined by
	\begin{equation}\label{Laplaican_mat}
		\dqm L =   {D}-\dqm A,
	\end{equation}
	where  {$D$} is a {diagonal matrix}  {whose}  $i$-th diagonal element  is equal to the {out-degree}  {$d_i$} of  the $i$-th vertex.
	
	\begin{Thm} \label{similar_DQWDG} Suppose that we have a  WDG   $\Phi = (G, \Omega, \varphi)$.    Then $\Phi$ has a potential function $\theta$ if and only if there is  {
			\begin{equation}\label{equ_y}
				\vdq \vy=[\theta(1)^{-1}|\theta_s(1)|,\dots,\theta(n)^{-1}|\theta_s(n)|]^\top
		\end{equation}}   satisfies
		\begin{equation} \label{LL1_DQ}
			\dqm Y^{-1} {\dqm L}\dqm Y=L= D-A,
		\end{equation}
		where
		$\dqm Y=\mathrm{diag}(\vdq \vy)$, $A=(a_{ij})$ is  the  {weighted} adjacency matrix with $a_{ij} = |\varphi_s(i,j)|$ for all $(i,j)\in {V\times V}$.
		
		Furthermore,  assume that $\Phi$
		{has a potential function $\theta$}. Then all eigenvalues of $L$ are right eigenvalues  {of} {$\dqm L$}, which are complex numbers, and zero is an eigenvalue of $\dqm L$ that  satisfies
		\begin{equation} \label{null_DQ}
			\dqm L \vdq \vy = \vdq 0.
		\end{equation}
		In other words, all inverse potential vectors are  {eigenvectors} of {$\dqm L$}, corresponding to the zero eigenvalue of {$\dqm L$}.
		
		If, in addition,  $G$   has a directed spanning tree, then $\theta$ is a potential function if and only if  it is invertible and  $\vdq \vy$   {defined by \eqref{equ_y} satisfies $|\vy_s|={\bf 1}$ and}  \eqref{null_DQ}.
	\end{Thm}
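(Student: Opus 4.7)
The plan is to verify the three assertions of the theorem in sequence, each building on the previous.

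For the similarity \eqref{LL1_DQ}, I would use the explicit formula $c_{ij}=|\varphi_s(i,j)||\theta_s(i)|/|\theta_s(j)|$ derived in \eqref{cij_DQ}. Substituting $\varphi(i,j)=\theta(i)^{-1}\theta(j)c_{ij}$ and $\vy(i)=\theta(i)^{-1}|\theta_s(i)|$ into the off-diagonal entry $-\vy(i)^{-1}\varphi(i,j)\vy(j)$ of $\dqm Y^{-1}\dqm L\dqm Y$, the $\theta$ factors cancel and the remaining real scalars combine to give $-c_{ij}|\theta_s(j)|/|\theta_s(i)|=-a_{ij}$; the diagonal $d_i$ is real and passes through the conjugation by $\vy(i)$ unchanged. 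The converse reads the off-diagonal of \eqref{LL1_DQ} as $\varphi(i,j)=\vy(i)a_{ij}\vy(j)^{-1}$ and unpacks $\vy$ in terms of $\theta$ to recover the potential form with the positive real $c_{ij}=|\theta_s(i)|a_{ij}/|\theta_s(j)|$.

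For the eigenvalue statement, once the similarity is in hand any complex eigenpair $L\vx=\vx\lambda$ (complex scalars commute with complex-entry vectors, so writing $\lambda$ on the right is unambiguous) yields $\dqm L(\dqm Y\vx)=\dqm Y L\vx=(\dqm Y\vx)\lambda$, so every complex eigenvalue of $L$ is a right eigenvalue of $\dqm L$. Taking $\lambda=0$ with the standard null vector $\mathbf{1}$ of $L$ gives $\dqm L\vy=\dqm L\dqm Y\mathbf{1}=0$, which is \eqref{null_DQ}.

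The third assertion adds the spanning-tree hypothesis. The forward direction is immediate: the first two parts give \eqref{null_DQ}, and $|\vy_s(i)|=|\theta_s(i)^{-1}||\theta_s(i)|=1$ as soon as $\theta$ is invertible. For the reverse direction, left-multiply the $i$-th row of \eqref{null_DQ} by $\vy(i)^{-1}$ to get $\sum_{j:(i,j)\in E}\vy(i)^{-1}\varphi(i,j)\vy(j)=d_i=\sum_j a_{ij}$. Taking standard parts and using $|\vy_s|=\mathbf{1}$, each summand $\vy_s(i)^{-1}\varphi_s(i,j)\vy_s(j)$ is a quaternion of magnitude $a_{ij}$ whose sum is the real number $\sum_j a_{ij}$; the strict quaternion triangle equality then forces each summand to equal $a_{ij}$, yielding the standard-part relation $\varphi_s(i,j)=\vy_s(i)a_{ij}\vy_s(j)^{-1}$.

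The main obstacle is transferring this identity from the standard to the dual parts, since \eqref{null_DQ} alone constrains only row-sums of the dual parts rather than per-arc values. To close this gap I would invoke Lemma~\ref{Lem:zeroeig_G}, which under the spanning-tree hypothesis makes $0$ a simple eigenvalue of $L=D-A$, and combine it with an inductive construction along a directed spanning tree analogous to the one in the preceding theorem's proof: moving arc by arc from the root along the tree, the established standard-part identity together with the dual-part row-sum constraints from \eqref{null_DQ} should force the dual-part potential identity at each tree arc, while the spanning-tree rigidity handles the non-tree arcs via the unique cycles they close, yielding $\varphi(i,j)=\theta(i)^{-1}\theta(j)c_{ij}$ on every arc of $E$ as required.
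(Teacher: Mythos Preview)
Your treatment of the first two assertions---the similarity \eqref{LL1_DQ} via the explicit formula \eqref{cij_DQ}, and the transfer of eigenvalues through $\vdq z=\dqm Y\vx$---matches the paper's proof essentially line for line.

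For the third assertion your route diverges from the paper's and leaves a real gap. The point you overlook is that this assertion sits under the standing hypothesis of the preceding paragraph that $\Phi$ already admits \emph{some} potential function, say $\theta_0$, with associated $\vdq y_0$ and hence the similarity $\dqm L=\dqm Y_0 L\,\dqm Y_0^{-1}$ already in hand. The paper's reverse direction is then short: from $\dqm L\vdq y=\vdq 0$ one gets $L(\dqm Y_0^{-1}\vdq y)=\vdq 0$; since $L$ is real with a simple zero eigenvalue (Lemma~\ref{Lem:zeroeig_G}), its dual-quaternion null space is $\{\mathbf 1\,\hat c\}$, so $\vdq y=\vdq y_0\hat c$, the condition $|\vy_s|=\mathbf 1$ forces $\hat c\in\udqset$, and $\hat c^{-1}\theta_0$ is the required potential function. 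No per-arc analysis of dual parts is needed.

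Your direct approach---row sums plus the quaternion triangle-equality for the standard parts, followed by an induction along a spanning tree for the dual parts---does yield the standard-part identity $\varphi_s(i,j)=\vy_s(i)a_{ij}\vy_s(j)^{-1}$, but the dual-part step does not close. Equation \eqref{null_DQ} gives one constraint per vertex, whereas you need one identity per outgoing arc; at any vertex of out-degree at least two the row-sum underdetermines the individual terms, and the spanning-tree induction offers no extra equation for the non-tree arcs beyond those same row sums. In effect you are attempting the reverse direction without using the hypothesis that $\Phi$ is balanced, which is why the argument stalls; invoking the already-established similarity, as the paper does, sidesteps the difficulty entirely.
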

	\begin{proof}
		Suppose that $\Phi$ has a potential function $\theta$  {which} satisfies \eqref{potential}. Then for all $(i,j)\in E$, we have
		\[\dq y_i^{-1} \varphi(i,j)\dq y_j = {\frac{|\theta_s(j)|}{|\theta_s(i)|}} \theta(i) \theta(i)^{-1}\theta(j) c_{ij} \theta(j)^{-1} =  |\varphi_s(i,j)|,\]
		where  {$\vdq \vy$ is defined by (\ref{equ_y}), and} the last equality follows from \eqref{cij_DQ}.
		For the diagonal elements,  {we have}
		\[\dq y_i^{-1}  d_i\dq y_i = d_i, \]
		where the   equality follows from  {the fact that} $d_i$  {is a real number, which} is commutative with any dual quaternion numbers.
		This derives \eqref{LL1_DQ}.
		
		On the other hand, suppose \eqref{LL1_DQ} holds. Then we could verify that \eqref{potential} holds with $c_{ij}$ satisfies  \eqref{cij_DQ}.

		{Suppose that}  {$\lambda$} is an eigenvalue of $L$ with an eigenvector  {$\vx$}.
		From the definition, we have
		\[L \vx = \lambda \vx 	\Longleftrightarrow  	\dqm Y^{-1} {\dqm L}\dqm Y \vx=  \lambda \vx
		\Longleftrightarrow   \hat L \dqm Y \vx= \dq Y\vx \lambda
		\Longleftrightarrow   \hat L   \vdq z=  \vdq z \lambda,
		\]
		where $\vdq z = \dq Y \vx$.  	In other words,
		all eigenvalues of $L$ are also right eigenvalues of $\dqm L$ in the sense of \cite{QL23}.
		By {the  spectral theory of $L$}, the zero eigenvalue of $L$ has an eigenvector ${\bf 1}$.  	
		Therefore, zero is also  {a} right eigenvalue of $\dqm L$ and its corresponding eigenvector is $\vdq y$.
		By \cite{QL23}, a real right eigenvalue of a dual quaternion matrix can be simply called an eigenvalue of that matrix.   Thus, zero is an eigenvalue of {$\dqm L$}.

		In addition, if $G$ has a directed spanning tree, then by Lemma~\ref{Lem:zeroeig_G}, zero is a simple eigenvalue of $L$.
		In this case,  the set of all invertible vectors  that satisfy \eqref{null_DQ} {and $|\vy_s|={\bf 1}$} is  $[\vdq y] = \vdq y \dq c$,  where $\dq c\in\udqset$.   By  \eqref{equ_y}, we could construct  the set $[\theta] = \dq c^{-1} \theta$ corresponding to $[\vdq y]$.  The sufficient condition holds  from the fact that $\dq c^{-1} \theta$ is also  a potential function.
		
		This completes the proof.
	\end{proof}

	

	{
		\section{Numerical Experiments}
		
		We   verify the balance of  directed weighted graphs, {or equivalently, the reasonableness of the desired relative configuration,}  by the directed method in Algorithm~\ref{alg:is_resonable} and the gain graph method in Section~\ref{sec:gain_graph_method_balance}.
		For both methods, we implement Steps 1-3 sequentially.
		{In} Step 3 of the gain graph method,
		{we solve \eqref{null2} following the same procedure as that of Section~\ref{sec:direct_method}.}
		If there exists a   condition  in Steps 1-3 {that} fails, then we let $Err=1$ and exit the process. Otherwise, denote ${\vdq x}\in\udqset^n$ as the vector returned by Step 3, and   $\dqm Q=\mathrm{diag}(\bar{\vdq x})$.
		We    define the residue  in \eqref{LL1} as follows,
		\begin{equation}\label{equ:err_balance}
			Err = \|\dqm Q {\dqm L}\dqm Q^*-L\|_{F^R},
		\end{equation}
		We say the gain graph is balanced if   $Err$ is less than a threshold, i.e.,  $10^{-8}$ in our numerical experiments.

		We use 	``direct'' and  ``GGM'' to denote the directed method and the gain graph method, respectively,  and report the CPU time in seconds (``CPU (s)'') for implementing Steps 1-3 in total.
		We  generate unit dual complex and quaternion directed cycles, with $E=\{(1,2),\dots,(n-1,n),(n,1)\}$.
		All these cycles are balanced, {and a smaller value of ``Err'' indicates a better outcome.}
		The numerical results {for $n\in\{10,20,50,100,200,500\}$} are shown  in  Table	\ref{table:cycle_balance}.
		
		From this table, we see that ``Err'' is always less than $10^{-12}$, which {demonstrates} that both ``direct'' and  ``GGM'' methods can verify the balance of the unit dual complex and quaternion directed cycles successfully.
		Comparing ``direct'' and  ``GGM'', we find that
		{there is no much difference.}
		
		\begin{table}[t]
			\centering
			\caption{Numerical results for verifying the {balance} of  unit dual complex and quaternion directed cycles.}\label{table:cycle_balance}
			\begin{tabular}{c|c|cccccc}
				\hline
				&$n$& 10 & 20 & 50 & 100 & 200 & 500  \\ \hline
				\multicolumn{7}{c}{Unit dual complex directed cycles} \\\hline
				Direct& CPU (s)  &9.29e$-$03 & 5.27e$-$04 & 1.21e$-$03 & 3.11e$-$03 & 1.53e$-$02 & 1.77e$-$01  \\
				&Err &	1.73e$-$15 & 4.11e$-$15 & 1.28e$-$14 & 1.21e$-$14 & 1.68e$-$14 & 4.03e$-$14 \\
				
				\hline
				GGM	&CPU (s) &
				4.98e$-$04 & 3.78e$-$04 & 1.62e$-$03 & 3.74e$-$03 & 1.52e$-$02 & 1.55e$-$01 \\
				&Err &	3.46e$-$15 & 1.15e$-$14 & 5.25e$-$14 & 7.95e$-$14 & 1.51e$-$13 & 6.61e$-$13 \\
				\hline
				
				\multicolumn{7}{c}{Unit dual quaternion directed cycles} \\\hline
				Direct& CPU (s)  & 9.78e$-$03 & 1.15e$-$02 & 2.23e$-$02 & 5.25e$-$02 & 1.86e$-$01 & 3.34e+00 \\
				&Err & 4.52e$-$15 & 4.15e$-$15 & 6.00e$-$15 & 1.10e$-$14 & 1.39e$-$14 & 2.41e$-$14 \\
				\hline
				GGM&	CPU (s) & 6.37e$-$03 & 8.95e$-$03 & 2.19e$-$02 & 4.97e$-$02 & 1.84e$-$01 & 3.71e+00 \\
				&Err & 1.23e$-$14 & 1.38e$-$14 & 3.35e$-$14 & 7.48e$-$14 & 2.13e$-$13 & 5.89e$-$13\\
				
				\hline
			\end{tabular}
		\end{table}
		
		\section{Conclusions and Future Work}
		In this paper, we studied the unit dual quaternion directed graphs arising from the formation control problem.
		Two practical numerical methods, including the direct approach and the unit grain graph method, were proposed to verify whether the desired relative configuration scheme is reasonable or not. We then generalize the results to the balance problem of general non-unit weighted directed graphs.
		
	}
	In \cite{DL16}, Dong and Lin studied complex WDGs and derived necessary and sufficient conditions for the Laplacian matrix to be singular/ nonsingular.   The contribution of Dong and Lin is that their results were established without restriction that the adjacency matrix is Hermitian.   On the other hand,  there is no theory on eigenvalues of non-Hermitian dual quaternion matrices.  Hence, it would be interesting to extend the results of Dong and Lin to general non-unit {WDGs}.  We leave this as a future research work. 	{Other problems such as  graph energy \cite{LSG12}, {main eigenvalues of graphs \cite{SY22} and inertia of graphs \cite{YQT15}} could also be considered.

		\bigskip

		\bigskip


		
		{{\bf Acknowledgment}  
			This work was partially supported by   the R\&D project of Pazhou Lab (Huangpu) (Grant no. 2023K0603),  the National Natural Science Foundation of China (Nos. 12471282 and 12131004), and the Fundamental Research Funds for the Central Universities (Grant No. YWF-22-T-204).
			
			

			{{\bf Data availability} Data will be made available on reasonable request.

				{\bf Conflict of interest} The authors declare no Conflict of interest.}

			



\begin{thebibliography}{10}
				
				{\bibitem{ASMN17} S. Ahmadizadeh, I. Shames, S. Martin and D. Ne\v{s}i\'c, ``On eigenvalues of Laplacian matrix for a class of directed signed graphs'', {\sl Linear Algebra  and its Applications \bf 523} (2017) 281--306, ``Corrigendum'', {\sl Linear Algebra  and its Applications \bf 530} (2017) 541--557. }
				
				
				
				
				\bibitem{BP24} L. B\"ottcher  and M.A. Porter, ``Complex networks with complex weights'', {\sl Physical Review E  \bf 109}  (2024) 024314.
				
				
				
				
				
				\bibitem{CHWG24} S. Chen, H. Hu, S. Wang and C. Guo, ``Dual quaternion matrices in precise formation flying of sateliite clusters'', {\sl Communications on Applied Mathematics and Computation}, DOI:10.1007/s42967-024-00460-4.		
				
				
				
				
				
				
				
				
				\bibitem{CLQW24} C. Cui, Y. Lu, L. Qi and L. Wang, ``Spectral properties of dual  unit gain graphs'', {\sl Symmetry \bf 16} (2024) No. 1142.
				
				
				
				
				
				{\bibitem{DLLWZ24}
					W. Ding, Z. Liu, Y. Li, A. Wei  and M. Zhang, ``New structure-preserving algorithms of Gauss-Seidel and successive over-relaxation iteration methods for quaternion linear systems'',  {\sl Numerical Algorithms \bf 95}  (2024) 1309-1323.}
				
				{ \bibitem{DL16} J. Dong  {and} L. Lin, ``Laplacian matrices of general complex weighted directed graphs'', {\sl {Linear Algebra and its Applications} \bf 510} (2016) 1--9.}
				
				
				
				
				
				
				
				
				
				
				
				
				\bibitem{JN21} Z. Jia and M.K. Ng, ``Structure preserving quaternion generalized minimal residual method'', {\sl SIAM J. Matrix Anal. Appl. \bf 42} (2021) 616-634.
				
				\bibitem{JZC17} Y. Jiang, H. Zhang and J. Chen, ``Sign-Consensus of linear multi-agent systems over signed directed graphs'', {\sl IEEE Transactions on Industrial Electronics \bf 64} (2017) 5075-5083.
				
				{ \bibitem{KK23} T. Ko {and} C.K. Kim, ``A graph convolution for signed directed graphs'', Feb 2023, arXiv:2208.11511v3.}
				
				{\bibitem{KP87} M. A. Kramer {and} B.L. Palowitch, ``A rule-based approach to fault diagnosis using the signed directed graph'', {\sl AIChE Journal \bf 33} (1987) 1067--1078.}
				
				
				
				
				
				
				
				
				
				
				
				
				
				\bibitem{LWZ23} T. Li, Q.-W. Wang  and X.-F. Zhang, ``Gl-QFOM and Gl-QGMRES: two efficient algorithms for quaternion linear systems with multiple right-hand sides'', 	August 2023, arXiv:2308.13214.	
				
				{\bibitem{LSG12}
					X. Li, Y. Shi and I. Gutman, {\sl Graph Energy}, Springer Science \& Business Media, 2012.	
				}
				
				{\bibitem{LYWL18} Y. Li, S. Yuan, X. Wu {and} A. Lu, ``On spectral analysis of directed signed graphs'', {\sl { International Journal of Data Science and Analytics} \bf 6} (2018) 147--162.}
				
				
				
				
				{\bibitem{LWCFH15} Z. Lin, L. Wang, Z. Chen, M. Fu and Z. Han, ``Necessary and sufficient graphical conditions for affine formation control'', {\sl IEEE Transactions on Automatic Control \bf 61} (2015) 2877 - 2891.}
				
				\bibitem{LWHF14} Z. Lin, L. Wang, Z. Han and M. Fu, ``Distributed formation control of multi-agent systems using complex Laplacian'', {\sl IEEE Transactions on Automatic Control \bf 59} (2014) 1765-1777.
				
				
				
				
				{\bibitem{LH15}Y. Lou and Y. Hong, ``Distributed surrounding design of target region 	with complex adjacency matrices'', 	
					{\sl IEEE Transactions on Automatic Control \bf 60}    (2015) 283-288.
				}
				
				
				\bibitem{OFM07} R. Olfati-Saber, J.A. Fax and R.M. Murray, ``Consensus and cooperation in networked multi-agent systems," {\sl Proceedings of the IEEE \bf 95} (2007)  215-233.
				
				
				
				
				
				
				
				\bibitem{QC24} L. Qi and C. Cui, ``Eigenvalues of dual Hermitian matrices with application in formation control'', {\sl SIAM Journal on Matrix Analysis and Applications \bf 45} (2024) 2135-2154.
				
				
				{\bibitem{QL23} L. Qi and Z. Luo, ``Eigenvalues and singular values of dual quaternion matrices'', {\sl Pacific Journal of Optimization \bf 19} (2023) 257-272.}
				
				
				
				
				
				
				
				{\bibitem{RB08} W. Ren and R.W. Beard, {\sl Distributed Consensus in Multi-vehicle Cooperative Control: Theory and Applications}, Springer-Verlag London 2008.
				}
				
				\bibitem{SS91} H. Schneider and M.H. Schneider, ``Max-Balancing weighted directed graphs and matrix scaling'', {\sl Mathematics of Operations Research \bf 16} (1991) 208-222.
				
				{\bibitem{SY22} Z. Shao and X. Yuan, ``Some signed graphs whose eigenvalues are main'', {\sl Applied Mathematics and Computation \bf 423} (2022) No. 127014. }
				
				
				
				\bibitem{WHYZ12} X. Wang, D. Han, C. Yu and Z. Zheng, ``The geometric structure of unit quaternion with application in kinematic control'', {\sl Journal of Mathematical Analysis and Applications \bf 389} (2012) 1352-1364.
				
				\bibitem{WYL12} X. Wang, C. Yu and Z. Lin, ``A dual quaternion solution to attitude and position control for rigid body coordination'', {\sl IEEE Transactions on Robotics \bf 28} (2012) 1162-1170.
				
				
				
				\bibitem{WvD22} P. Wissing and E.R. van Dam, ``Spectral fundamentals and characterizations of signed directed graphs'', {\sl Journal of Combinatorial Theory, Series A \bf 187} (2022) 105573.
				
				
				
				{\bibitem{YSX10} F. Yang, S.L. Shah and D. Xiao,  ``SDG (Signed Directed Graph) based process description and fault propagation analysis for a tailings pumping process'',  {\sl IFAC Proceedings Volumes \bf 43} (2010) 50-55.}
				
				\bibitem{YQT15} G. Yu, H. Qu and J. Tu, ``Inertia of complex unit gain graphs'', {\sl Applied Mathematics and Computation \bf 265} (2015) 619-629.
				
				
				
				\bibitem{Za89} T. Zaslavsky, ``Biased graphs. I. Bias, balance, and gains'', {\sl  {Journal of Combinatorial Theory, Series}  B  \bf	47} (1989) 32-52.
				
				
				
				
				
				
				
				
			\end{thebibliography}
		\end{document}